\def\hide#1{}
\def\old#1{}
\def\oop#1{}
\def\gap#1{}
\theoremstyle{plain}
\newtheorem{theorem}{Theorem}
\newtheorem{proposition}{Proposition}
\newtheorem*{theorem*}{Theorem}
\newtheorem{lemma}{Lemma}
\newtheorem{corollary}{Corollary}
\newtheorem{definition}{Definition}
\def \ZZ  {\Bbb Z}
\def\address#1#2{\begingroup
\noindent\parbox[t]{7.8cm}{%
\small{\scshape\ignorespaces#1}\par\vskip1ex
\noindent\small{\itshape E-mail address}%
\/: #2\par\vskip4ex}\hfill%
\endgroup}%
\title{On the coverings of Hantzsche-Wendt manifold}
\author{
Grigory~Chelnokov
\;and
Alexander~Mednykh\thanks{The study was carried out within the
framework of the state contract of the
Sobolev Institute of Mathematics (project no. 0314-2019-0007)}\\
}
\date{}
\begin{document}

\newcounter{figcounter}
\setcounter{figcounter}{0} \addtocounter{figcounter}{1}

\maketitle 
\begin{abstract}
There are only 10 Euclidean forms, that is  flat closed three
dimensional manifolds: six are orientable
$\mathcal{G}_1,\dots,\mathcal{G}_6$ and four are non-orientable
$\mathcal{B}_1,\dots,\mathcal{B}_4$. In the present paper we
investigate the manifold $\mathcal{G}_6$, also known as
Hantzsche-Wendt manifold; this is the unique Euclidean $3$-form with
finite first homology group $H_1(\mathcal{G}_6) = \mathbb{Z}^2_4$.

The aim of this paper is to describe all types of $n$-fold coverings
over $\mathcal{G}_{6}$  and calculate the numbers of non-equivalent
coverings of each type. We classify subgroups in the fundamental
group $\pi_1(\mathcal{G}_{6})$ up to isomorphism.  Given index $n$,
we calculate the numbers of subgroups and the numbers of conjugacy
classes of subgroups for each isomorphism type and provide the
Dirichlet generating series for the above sequences.


\end{abstract}

\footnote{ 
2010 \textit{Mathematics Subject Classification}. Primary 20H15;
Secondary 57M10, 05A15, 55R10.}
\footnote{ 
\textit{Key words and phrases}. Euclidean form, platycosm, flat
3-manifold, non-equivalent coverings, crystallographic group,
Dirichlet generating series, number of subgroups, number of
conjugacy classes of subgroups.}


\section*{Introduction}
Let  $\mathcal{M}$  be a connected manifold with fundamental group
$G=\pi_{1}(\mathcal{M}).$ Two coverings
$$p_1: \mathcal{M}_1 \to \mathcal{M}   \text {      and       } p_2:
\mathcal{M}_2 \to \mathcal{M}  $$ are said to be equivalent if there
exists a homeomorphism $h:  \mathcal{M}_1 \to \mathcal{M}_2$ such
that $p_1 =p_2 \circ h.$ According to the general theory of covering
spaces, any $n$-fold covering is uniquely determined by a subgroup
of index $n$ in the group $G$. The equivalence classes of $n$-fold
coverings of $\mathcal{M}$ are in one-to-one correspondence with the
conjugacy
 classes of subgroups of index $n$ in the fundamental group
 $\pi_1(\mathcal{M}).$ See,
for example, (\cite{Hatch}, p.~67).
  In such a way the following natural problems arise: to
  describe the isomorphism classes of subgroups of finite index in
  the fundamental group of a given manifold and to enumerate the
  finite index subgroups and their conjugacy classes with respect to
  isomorphism type.

We use the following notations: let $s_G(n)$ denote the number of
subgroups of index $n$ in the group $G$, and let $c_G(n)$ be the
number of conjugacy classes of such subgroups. Similarly, by
$s_{H,G}(n)$ denote the number of subgroups of index $n$ in the
group $G$, which are isomorphic to $H$, and by $c_{H,G}(n)$ the
number of conjugacy classes of such subgroups. So, $c_G(n)$
coincides with the number of nonequivalent $n$-fold coverings over a
manifold $\mathcal{M}$ with fundamental group
$\pi_1(\mathcal{M})\cong G$, and $c_{H,G}(n)$ coincides with the
number of nonequivalent $n$-fold coverings $p: \mathcal{N}\to
\mathcal{M}$, where $\pi_1(\mathcal{N})\cong H$ and
$\pi_1(\mathcal{M})\cong G$. The numbers $s_G(n)$ and $c_G(n)$,
where $G$ is the fundamental group of closed orientable or
non-orientable surface, were found  in (\cite{Med78}, \cite{Med79},
\cite{MP86}). In the paper \cite{Medn}, a general method for
calculating the number $c_G(n)$  of conjugacy classes of subgroups
in an arbitrary finitely generated group $G$ was given. Asymptotic
formulas for $s_G(n)$ in many important cases were obtained in
\cite{Lub}.

The values of $s_G(n)$ for the wide class of 3-dimensional Seifert
manifolds were calculated in \cite{LisMed00} and \cite{LisMed12}.
The present paper is a part of the series of our papers devoted to
enumeration of finite-sheeted coverings  over closed Euclidean
3-manifolds. These manifolds are also known as flat 3-dimensional
manifolds or Euclidean 3-forms.

The class of such manifolds is closely related to the notion of
Bieberbach group. Recall that a subgroup of isometries of
$\mathbb{R}^3$ is called {\em Bieberbach group} if it is discrete,
cocompact and torsion free.  Each  $3$-form can be represented as a
quotient $\mathbb{R}^3/G$, where $G$ is a Bieberbach group. In this
case, $G$ is isomorphic to the fundamental group of the manifold,
that is $G\cong\pi_1(\mathbb{R}^3/G)$. Classification of three
dimensional Euclidean forms up to homeomorphism was  obtained  by W.
Nowacki \cite{Now}  and  W. Hantzsche and H. Wendt \cite{H-W}. There
are only 10 Euclidean forms: six are orientable
$\mathcal{G}_1,\dots,\mathcal{G}_6$ and four are non-orientable
$\mathcal{B}_1,\dots,\mathcal{B}_4$ See monograph \cite{Wolf} for
more details.


In our previous paper \cite{We1} we describe isomorphism types of
finite index subgroups $H$ in the fundamental group $G$ of manifolds
$\mathcal{B}_1$ and $\mathcal{B}_2$. Further, we calculate the
respective numbers $s_{H,G}(n)$ and $c_{H,G}(n)$ for each
isomorphism type $H$.
In subsequent articles \cite{We2}, \cite{We3} and \cite{We4} similar
questions were solved for manifolds $\mathcal{G}_2$,
$\mathcal{G}_3$, $\mathcal{G}_4$, $\mathcal{G}_5$, $\mathcal{B}_3$
and $\mathcal{B}_4$.

The aim of the present paper is to solve the same questions for the
Hantzsche--Wendt manifold $\mathcal{G}_6,$  undoubtedly the most
weird among Euclidean $3$-manifolds.  This is the unique Euclidean
$3$-form with finite first homology group $H_1(\mathcal{G}_6) =
\mathbb{Z}^2_4.$
Its fundamental set consists of two cubes described below. See  also
(\cite{Eve}, Table 4) for  description of a cubical fundamental
domain for $\mathcal{G}_6$. The Hantzsche--Wendt  manifold is   also
known as Fibonacci manifold $M_3.$  The Fibonacci manifold $M_n,\,
n\ge2$
 is a closed orientable three-dimensional manifold whose fundamental group is the
 Fibonacci group $F(2,2n)=\langle x_1,\ldots,x_{2n}: x_ix_{i+1}=x_{i+2},\,i\mod{2n}\rangle.$
These manifolds were discovered by H. Helling, A.C. Kim and J.
Mennicke \cite{HKM}.
 It was shown   by H.M. Hilden, M.T. Lozano and J.M. Montesinos \cite{a3} that
$ M_n$  is the $n$-fold cyclic covering of the three-dimensional
sphere $\mathbb{S}^3$ branched over the figure-eight knot.

 Also, by A.Yu. Vesnin and A.D. Mednykh \cite{a9}, $ M_3$  is the two fold covering of   $\mathbb{S}^3$ branched over the Borromean rings.
 The outer automorphism group of the Hantzsche--Wendt manifold  was calculated by B. Zimmermann \cite{Zim}. Its high  dimensional
 analogues were investigated by A. Szczepa{\'n}ski \cite{Szcz}.

The description of $\mathcal{G}_6$ through  Bieberbach group is the
following: the group is generated by isometries
\begin{align*}
S_1: (x,y,z) &\mapsto (x+1, -y, -z+1),\\
S_2: (x,y,z) &\mapsto (-x+1,y+1, -z),\\
S_3: (x,y,z) &\mapsto (-x, -y+1, z+1).
\end{align*}

In more geometric terms $\mathcal{G}_6$ can be described in the
following way. We take the union of two cubes $[-1,0]^3\cup [0,1]^3$
as the fundamental domain of $\mathcal{G}_6$, we call this cubes
positive and negative respectively. Now we have to provide six
isometries to align each face of the negative cube with the
respective face of the positive cube, we glue faces by this
alignment.
\begin{itemize}
\item we align faces $z=0$ and $z=-1$ with faces $z=1$ and $z=0$
through $(x,y,z)\mapsto (x+1,-y,-z+1)$ and $(x,y,z)\mapsto
(x+1,-y,-z-1)$ respectively (these are isometries $S_1$ and
$S_3^{-2}S_1$ respectively);
\item align faces $x=0$ and $x=-1$ with faces $x=1$ and $x=0$
through $(x,y,z)\mapsto (-x+1,y+1,-z)$ and $(x,y,z)\mapsto
(-x-1,y+1,-z)$ respectively (these are isometries $S_2$ and
$S_1^{-2}S_2$ respectively);
\item finally, align faces $y=0$ and $y=-1$ with faces $z=1$ and $z=0$
through $(x,y,z)\mapsto (-x,-y+1,z+1)$ and $(x,y,z)\mapsto
(-x,-y-1,z+1)$ respectively (these are isometries $S_3$ and
$S_2^{-2}S_3$ respectively).
\end{itemize}

In the present paper, we classify finite index subgroups in the
fundamental group $\pi_1(\mathcal{G}_{6})$ up to isomorphism. Given
index $n$, we calculate the numbers of subgroups and the numbers of
conjugacy classes of subgroups for each isomorphism type.  Also, we
provide the Dirichlet generating functions for all the above
sequences.

Numerical methods to solve these and similar problems for the
three-dimensional crystallographic groups were developed by the
Bilbao group \cite{babaika}. The convenience of language of
Dirichlet generating series for this kind of problems was
demonstrated in \cite{Ruth}. The first homologies of all the
three-dimensional crystallographic groups are determined in
\cite{Ratc}.

\subsection*{Notations}
Let $G$ be a group, $u$, $v$ are elements and $H$, $F$ are subgroups
in $G$. We use $u^v$ instead of $vuv^{-1}$ and $[u,v]$ instead of
$uvu^{-1}v^{-1}$ for the sake of brevity. By $H^v$ denote the
subgroup $\{u^v|\,u\in H\}$. By $H^F$ denote the family of subgroups
$H^v,\, v \in F$. By $Ad_v: G \to G$ denote the automorphism given
by $u \to u^v$.

By $s_{H,G}(n)$ we denote the number of subgroups of index $n$ in
the group $G$ isomorphic to the group $H$; by $c_{H,G}(n)$ the
number of conjugacy classes of subgroups of index $n$ in the group
$G$ isomorphic to the group $H$. Through this paper usually $G$ and
$H$ are fundamental groups of manifolds $\mathcal{G}_{i}$, in this
case we omit $\pi_1$ in indices.

 Also we will need the
following number-theoretic functions. Given a fixed $n$ we widely
use summation over all representations of $n$ as a product of two or
three positive integer factors $\textstyle \sum_{ab=n}$ and
$\textstyle\sum_{abc=n}$. The order of factors is important. We
assume this sum vanishes if $n$ is not integer.

To start with, this is the natural language to express the function
$\sigma_0(n)$ -- the number of representations of number $n$ as a
product of two factors $ \textstyle\sigma_0(n)=\sum_{ab=n}1. $
We will also need the following generalizations of $\sigma_0$:
\begin{align*}
\sigma_1(n)&=\sum_{ab=n}a, &
\sigma_2(n)&=\sum_{ab=n}\sigma_1(a)=\sum_{abc=n}a,\\
d_3(n)&=\sum_{ab=n}\sigma_0(a)=\sum_{abc=n}1, & \omega(n)
&=\sum_{ab=n}a\sigma_1(a)= \sum_{abc=n}a^2b.
\end{align*}

\section{Formulation of main results}
The main goal of this paper is to prove the following two theorems.

\begin{theorem}\label{th-1-didicosm}
Every subgroup $\Delta$ of finite index $n$ in
$\pi_{1}(\mathcal{G}_{6})$ is isomorphic to either
$\pi_{1}(\mathcal{G}_{6})$, or $\pi_{1}(\mathcal{G}_{2})$, or
$\ZZ^3$. The respective numbers of subgroups are
$$
s_{\mathcal{G}_1,\mathcal{G}_6}(n)=\omega\left(\frac{n}{4}\right),\leqno
(i)
$$
$$
s_{\mathcal{G}_2,\mathcal{G}_6}(n)=3\omega\left(\frac{n}{2}\right)-3\omega\left(\frac{n}{4}\right),\leqno
(ii)
$$
$$
s_{\mathcal{G}_6,\mathcal{G}_6}(n)=n\left(d_3(n)-3d_3\left(\frac{n}{2}\right)+3d_3\left(\frac{n}{4}\right)-d_3\left(\frac{n}{8}\right)\right).
\leqno (iii)
$$
\end{theorem}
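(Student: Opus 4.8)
The plan is to work directly inside the Bieberbach group $G=\pi_1(\mathcal{G}_6)=\langle S_1,S_2,S_3\rangle$ and to exploit its structure as an extension $1\to\ZZ^3\to G\to\ZZ_2^2\to 1$, where $\ZZ^3=\Lambda$ is the translation lattice (generated by $S_1^2,S_2^2,S_3^2$, which act as translations by $(2,0,0),(0,2,0),(0,0,2)$) and the point group $\ZZ_2^2$ is generated by the images of $S_1,S_2$ acting diagonally with $(+,-,-)$, $(-,+,-)$, $(-,-,+)$. A finite-index subgroup $\Delta\le G$ of index $n$ meets $\Lambda$ in a finite-index sublattice $M=\Delta\cap\Lambda$, and the image $\bar\Delta$ of $\Delta$ in $\ZZ_2^2$ is one of the five subgroups (trivial, three of order $2$, the whole group). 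I would set $m=[\ZZ_2^2:\bar\Delta]\in\{1,2,4\}$, so $[\Lambda:M]=n/m$, and classify according to $\bar\Delta$: if $\bar\Delta=\ZZ_2^2$ then $\Delta$ surjects onto the full point group and is Bieberbach of the same type, $\Delta\cong\pi_1(\mathcal{G}_6)$; if $\bar\Delta$ has order $2$ then $\Delta$ is a Bieberbach group with point group $\ZZ_2$ acting with a single $(+,-,-)$-type involution, which (since $\mathcal{G}_2$ is the unique orientable flat $3$-manifold with point group $\ZZ_2$) forces $\Delta\cong\pi_1(\mathcal{G}_2)$; and if $\bar\Delta$ is trivial then $\Delta=M\le\Lambda$ is a lattice, $\Delta\cong\ZZ^3$. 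This gives the isomorphism trichotomy asserted in the theorem and tells us which index-$n$ subgroups to count for each part.

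For the enumeration I would use the standard dévissage: a subgroup $\Delta$ with prescribed image $\bar\Delta$ is determined by (a) the sublattice $M\le\Lambda$ that it contains, subject to $M$ being invariant under the action of $\bar\Delta$ on $\Lambda$ (so that $M$ is normalized appropriately), together with (b) a choice of coset representatives, i.e.\ a cocycle-type datum in $H^1$ or an affine lift, counted modulo the relevant equivalence. Concretely, for $\bar\Delta=\ZZ_2^2$ (part (iii)): the lattice $M$ must be invariant under the diagonal sign action, and since the three involutions negate the three coordinate axes in pairs, an invariant sublattice of $\ZZ^3$ must be ``coordinate-aligned'' up to the constraints imposed — I expect the invariant sublattices of index $k$ in $\Lambda$ to be counted by a function like $k\mapsto$ (number of diagonal matrices) giving $\sum_{abc=k}1=d_3(k)$, and then the number of ways to complete such an $M$ to a subgroup mapping onto $\ZZ_2^2$ contributes the extra factor $n$ (coming from the freedom in choosing the lifts of the two generators of $\ZZ_2^2$, which is an $H^1(\ZZ_2^2;\Lambda/M)$-torsor of size essentially $|\Lambda/M|$ after accounting for conjugation/coboundaries). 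The inclusion–exclusion over which involutions ``survive'' — encoded by the alternating sum $d_3(n)-3d_3(n/2)+3d_3(n/4)-d_3(n/8)$ — should come from a Möbius/parity correction: a subgroup with full image $\ZZ_2^2$ must not accidentally have smaller image, and the divisibility-by-$2$ shifts track the $2$-power obstructions from the order-$2$ point group elements; the $4$ in the argument of $\omega$ and $d_3$ is exactly the index contributed by the point group $\ZZ_2^2$ of order $4$. Parts (i) and (ii) follow the same template with $\bar\Delta$ of order $1$ or $2$ respectively, and the functions $\omega(n/4)$, $3\omega(n/2)-3\omega(n/4)$ arise because counting invariant sublattices together with lift data for the smaller point groups produces $\omega$ instead of $n\cdot d_3$, with the factor $3$ in (ii) counting the three conjugacy classes of order-$2$ subgroups of $\ZZ_2^2$ (equivalently the three coordinate axes), again corrected by inclusion–exclusion.

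I would organize the proof as: (1) establish the three-term exact sequence and the classification of $\bar\Delta$; (2) for each of the three cases, reduce ``subgroups of index $n$ with image $\bar\Delta$'' to ``$\bar\Delta$-invariant sublattices $M\le\Lambda$ of index $n/m$ together with a cohomological lifting datum'', carefully identifying when two such data give the same subgroup of $G$; (3) compute the count of invariant sublattices — this is a finite linear-algebra problem over $\ZZ$ with the sign action, diagonalizable enough that the answer is a divisor-sum; (4) compute the size of the lifting-data set (an $H^1$ with twisted coefficients, finite and explicit since the coefficient module is finite); (5) assemble via inclusion–exclusion on the sublattice of $\ZZ_2^2$ fixed/realized, producing the stated alternating sums; and (6) sum $m=1,2,4$ contributions if a combined count is wanted, though here the three parts are reported separately by isomorphism type. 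The main obstacle I anticipate is step (2)–(4) done \emph{simultaneously and correctly}: one must count pairs (invariant sublattice, lift) up to the genuine equivalence of subgroups, which mixes the $GL_3(\ZZ)$-type count of sublattices with a nonabelian cohomology bookkeeping, and it is precisely the interaction between ``which sublattices are $2$-divisible in which directions'' and ``which lifts are torsion-free (Bieberbach) versus introduce fixed points'' that produces the delicate $2$-adic inclusion–exclusion coefficients $1,-3,3,-1$ and the shift by $4$; getting the torsion-freeness condition exactly right — ensuring the constructed $\Delta$ really is a Bieberbach group of the claimed type and not something with torsion — is where I expect the bulk of the careful argument to lie, and it is what pins down the isomorphism type in step (1) as well.
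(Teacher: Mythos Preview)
Your overall strategy coincides with the paper's: use the short exact sequence $1\to\Lambda\to G\to\ZZ_2^2\to 1$ with $\Lambda=\langle x^2,y^2,z^2\rangle$, classify finite-index subgroups by their image in $\ZZ_2^2$, and count each isomorphism type separately. The trichotomy and the counts for parts (i) and (ii) go through essentially as you describe; for (ii) the paper simply observes that the preimage in $G$ of each order-$2$ subgroup of $\ZZ_2^2$ is an index-$2$ subgroup isomorphic to $\pi_1(\mathcal{G}_2)$ and quotes the known value $s_{\mathcal{G}_2,\mathcal{G}_2}(n/2)=\omega(n/2)-\omega(n/4)$, rather than redoing a sublattice-plus-lift computation.

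Two points in your sketch of part (iii) need correction. First, the torsion-freeness concern is a red herring: every subgroup of the torsion-free group $G$ is automatically torsion-free, so there is nothing to check and this is not where the difficulty lies. Second, and more substantively, your interpretation of the alternating sum $d_3(n)-3d_3(n/2)+3d_3(n/4)-d_3(n/8)$ is off. It is \emph{not} a M\"obius inversion over the subgroup lattice of $\ZZ_2^2$ (those coefficients would be $1,-1,-1,-1,2$), and it does not arise from excluding subgroups with accidentally smaller image. The paper shows directly that subgroups $\Delta$ with $\bar\Delta=\ZZ_2^2$ are parametrized by $6$-tuples $(k,\ell,m,u,v,w)$ with $k,\ell,m$ \emph{odd} positive, $k\ell m=n$, and $0\le u<\ell$, $0\le v<m$, $0\le w<k$: the intersection $\Delta\cap\Lambda$ is forced to be the diagonal sublattice $\langle x^{2m},y^{2k},z^{2\ell}\rangle$ with all three parameters odd (this is where the real work is --- not every $\ZZ_2^2$-invariant sublattice occurs as such an intersection, only these diagonal odd ones, so your ``invariant sublattice plus $H^1$-lift'' count would have many empty fibres), and the factor $n=k\ell m$ then counts the choices of $(u,v,w)$. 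The alternating sum is simply the arithmetic identity expressing ``number of ordered factorizations of $n$ into three odd factors'' via inclusion--exclusion on the three independent parity conditions.
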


\begin{theorem}\label{th-2-didicosm}
Let $\mathcal{N} \to \mathcal{G}_{6}$ be an $n$-fold covering over
$\mathcal{G}_{6}$. Then  $\mathcal{N}$ is homeomorphic to one of
$\mathcal{G}_{6}$, $\mathcal{G}_{2}$ or $\mathcal{G}_{1}$. The
corresponding numbers of nonequivalent coverings are given by the
following formulas:
$$
c_{\mathcal{G}_1,\mathcal{G}_6}(n)=\frac{1}{4}\omega\left({\frac{n}{4}}\right)+\frac{3}{4}\sigma_2\left(\frac{n}{4}\right)+\frac{9}{4}\sigma_2\left(\frac{n}{8}\right),
\leqno (i)
$$
$$
c_{\mathcal{G}_2,\mathcal{G}_6}(n)=\frac{3}{2}\left(\sigma_2\!\!\left(\frac{n}{2}\right)+2\sigma_2\!\!\left(\frac{n}{4}\right)-3\sigma_2\!\!\left(\frac{n}{8}\right)+d_3\!\!\left(\frac{n}{2}\right)-d_3\!\!\left(\frac{n}{4}\right)-3d_3\!\!\left(\frac{n}{8}\right)+5d_3\!\!\left(\frac{n}{16}\right)-2d_3\!\!\left(\frac{n}{32}\right)
\right), \leqno (ii)
$$
$$
c_{\mathcal{G}_6,\mathcal{G}_6}(n)=d_3(n)-3d_3\left(\frac{n}{2}\right)+3d_3\left(\frac{n}{4}\right)-d_3\left(\frac{n}{8}\right).
\leqno (iii)
$$
\end{theorem}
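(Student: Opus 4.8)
\medskip
\noindent\textbf{Plan of proof of Theorem~\ref{th-2-didicosm}.}

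Throughout set $\Gamma=\pi_1(\mathcal{G}_6)$. It fits into a short exact sequence $1\to\Lambda\to\Gamma\xrightarrow{\pi}P\to 1$, where $\Lambda\cong\mathbb{Z}^3$ is the translation lattice (generated by $S_1^{2},S_2^{2},S_3^{2}$) and the holonomy group $P\cong\mathbb{Z}_2\times\mathbb{Z}_2$ acts on $\Lambda$ through the three diagonal matrices $\mathrm{diag}(1,-1,-1)$, $\mathrm{diag}(-1,1,-1)$, $\mathrm{diag}(-1,-1,1)$, the linear parts $a,b,c$ of $S_1,S_2,S_3$; the characteristic feature $\Lambda^{P}=0$ is exactly what makes $H_1$ finite. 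For a subgroup $\Delta\le\Gamma$ of index $n$ put $L=\Delta\cap\Lambda$ and $\bar\Delta=\pi(\Delta)\le P$; then $n=[P:\bar\Delta]\cdot[\Lambda:L]$, and the translation subgroup of $\Delta$ is exactly $L$ (a non-translation of $\Gamma$ has rotational part of order $2$ and so cannot lie in a normal abelian subgroup of $\Delta$), so the holonomy of $\Delta$ equals $\bar\Delta$. Combined with Theorem~\ref{th-1-didicosm} this shows that the homeomorphism type of the covering $\mathcal N=\mathbb{R}^3/\Delta$ is dictated by $|\bar\Delta|$: it is $\mathcal{G}_1$ if $\bar\Delta=1$, $\mathcal{G}_2$ if $|\bar\Delta|=2$, and $\mathcal{G}_6$ if $\bar\Delta=P$. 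Moreover $L$ must be $\bar\Delta$-invariant, and for a fixed pair $(L,\bar\Delta)$ the set of subgroups with that data is either empty (an obstruction in $H^2(\bar\Delta,\Lambda/L)$, the restriction of the Bieberbach class of $\Gamma$) or in bijection with $Z^1(\bar\Delta,\Lambda/L)$, namely the complements to $\Lambda/L$ in $\pi^{-1}(\bar\Delta)/L$.

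The number $c_{H,\mathcal{G}_6}(n)$ of nonequivalent coverings with $\pi_1\cong H$ is the number of orbits of $\Gamma$ acting by conjugation on the finite set $\mathcal S_H(n)$ of index-$n$ subgroups isomorphic to $H$. Since every member of $\mathcal S_H(n)$ contains the normal finite-index subgroup $\bigcap_{\Delta\in\mathcal S_H(n)}\Delta$, the action factors through a finite quotient of $\Gamma$ and $c_{H,\mathcal{G}_6}(n)$ is a Burnside average (this is the general scheme of \cite{Medn}). In our situation the action is explicit through the extension: conjugation by $t\in\Lambda$ fixes $(L,\bar\Delta)$ and shifts the $Z^1$-coordinate by the coboundary $\partial(t)$, hence factors through $\Lambda/L$ with kernel $(\Lambda/L)^{\bar\Delta}$; conjugation by an element lying over $h\in P$ sends $(L,\bar\Delta)$ to $(h\!\cdot\!L,\bar\Delta)$ and acts compatibly on cocycles. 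The fixed-point counts are then governed by (a) numbers of sublattices of $\mathbb{Z}^3$ invariant under a prescribed subgroup of $P$, and (b) the sizes of $Z^1$, $H^1$ and the vanishing locus of the $H^2$-obstruction — all computable since $\bar\Delta$ is $1$, $\mathbb{Z}_2$ or $\mathbb{Z}_2^2$ and the $2$-part of $\Lambda/L$ is controlled.

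Carrying this out: For $\mathcal{G}_1$, $\Delta=L\le\Lambda$ with $[\Lambda:L]=n/4$, conjugation by $\Lambda$ is trivial and the action reduces to $P$ permuting the index-$(n/4)$ sublattices, so Burnside gives $c_{\mathcal{G}_1,\mathcal{G}_6}(n)=\tfrac14\bigl(\omega(n/4)+3F(n/4)\bigr)$, where $F(m)$ is the number of index-$m$ sublattices of $\mathbb{Z}^3$ invariant under a fixed one of the three involutions; a Hermite-normal-form (equivalently Dirichlet-series) computation gives $F(m)=\sigma_2(m)+3\sigma_2(m/2)$, which is (i). For $\mathcal{G}_6$ one shows a sublattice $L$ of index $n$ carries a $\mathcal{G}_6$-subgroup only if $(\Lambda/L)^{P}=0$ (so $L=2\Lambda$ is excluded), and that in this case $N_\Gamma(\Delta)=\Delta$ for every such $\Delta$; thus every conjugacy class has the maximal size $n$ and $c_{\mathcal{G}_6,\mathcal{G}_6}(n)=\tfrac1n\,s_{\mathcal{G}_6,\mathcal{G}_6}(n)$, which is (iii) by Theorem~\ref{th-1-didicosm}(iii). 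For $\mathcal{G}_2$ one applies the Burnside average over the relevant finite quotient, splitting the fixed-point sum according to the identity and the three involutions of $P$, feeding in the numbers of sublattices invariant under a prescribed order-$2$ subgroup together with the $Z^1$/$H^1$/obstruction data, and then simplifying the resulting divisor sums with the identities for $\sigma_2$ and $d_3$ from the Notations section; this yields (ii).

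The main obstacle is the $\mathcal{G}_2$ computation. There $\bar\Delta$ runs over three order-$2$ subgroups of $P$ and the sublattice $L$ is invariant only under $\bar\Delta$, so conjugation by the remaining two involutions genuinely mixes different pairs $(L,\bar\Delta)$, the normalizer of $\Delta$ need not equal $\Delta$, and one must simultaneously track the $H^2$-obstruction (which sublattices actually support a $\mathcal{G}_2$-subgroup — the origin of the alternating combination $\omega(n/2)-\omega(n/4)$ in Theorem~\ref{th-1-didicosm}(ii), governed by how $L$ meets the fixed axis of $\bar\Delta$) and the way the $Z^1$-cocycle is permuted. Organizing this case analysis and verifying that the many divisor-sum contributions collapse precisely into the stated combination of $\sigma_2$ and $d_3$ is where essentially all of the work lies; everything else is routine and parallels the treatment of the other flat $3$-manifolds in \cite{We1, We2, We3, We4}.
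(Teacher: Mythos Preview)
Your plan for (i) and (iii) matches the paper almost exactly: for $\mathcal{G}_1$ the paper also does a Burnside count over $P\cong\ZZ_2^2$ acting on index-$n/4$ sublattices of $\Lambda$, quoting the formula $F(m)=\sigma_2(m)+3\sigma_2(m/2)$ for the sublattices fixed by a single involution (their Corollary~\ref{number of mirror-preserved}); for $\mathcal{G}_6$ the paper parametrizes subgroups by sextuples $(k,\ell,m,u,v,w)$ with $k\ell m=n$ odd and shows that conjugation by $\Lambda$ acts simply transitively on the $(u,v,w)$ part, which is equivalent to your $N_\Gamma(\Delta)=\Delta$.

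For (ii) your cohomological Burnside scheme is a legitimate route, but the paper takes a more concrete shortcut that avoids tracking $Z^1/H^1/H^2$ directly. It fixes one of the three index-$2$ subgroups $\Gamma_x=\langle x,y^2,z^2\rangle\cong\pi_1(\mathcal{G}_2)$ and imports from \cite{We2} the ready-made parametrization of $\pi_1(\mathcal{G}_2)$-type subgroups of $\pi_1(\mathcal{G}_2)$ together with their $\Gamma_x$-conjugacy classes (Propositions~\ref{number of G2_in_G2}--\ref{number of conjugacy_classes_G2_in_G2}); this gives the total number of \emph{partial} conjugacy classes $|\mathcal{K}_1|+|\mathcal{K}_2|=c_{\mathcal{G}_2,\mathcal{G}_2}(n/2)=\sigma_2(n/2)+2\sigma_2(n/4)-3\sigma_2(n/8)$ for free. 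The only new computation is counting which partial classes are fixed by the residual involution $Ad_y$ (their $|\mathcal{K}_1|$, Lemma~\ref{invariant_partial_G2_classes}), done by a two-case split on the Hermite form of $H\le\ZZ^2$. So what you flag as ``where essentially all of the work lies'' is in the paper reduced to about a page, because the heavy lifting is outsourced to the prior $\mathcal{G}_2$ analysis. The key organizing identity you are missing is $c_{\mathcal{G}_2,\mathcal{G}_6}(n)=3\bigl(\tfrac12\,c_{\mathcal{G}_2,\mathcal{G}_2}(n/2)+\tfrac12|\mathcal{K}_1|\bigr)$; your framework would eventually reproduce it, but only after rebuilding the $\mathcal{G}_2$-in-$\mathcal{G}_2$ classification inside the cohomological language.
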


{\bf Remark.} If $n$ is odd then $\mathcal{N}\cong \mathcal{G}_{6}$.
If $n \equiv 2 \mod 4$ then $\mathcal{N}\cong \mathcal{G}_{2}$.
Finally, if $4 \mid n$ then $\mathcal{N}\cong \mathcal{G}_{2}$ or
$\mathcal{N}\cong \mathcal{G}_{1}$.

Dirichlet generating series for the sequences provided by Theorems 1
and 2 are given in Table~2 in Appendix.


\section{Preliminaries}
In this section we have collected some known statements that will be
used later.

\begin{proposition}\label{number of sublattices}
\begin{itemize}
\item[(i)] The sublattices of index $n$ in the $2$-dimensional
lattice $\ZZ^2$ are in one-to-one correspondence with the matrices
$\bigl(\begin{smallmatrix} b & c \\
 0 & a\end{smallmatrix}\bigr)$, where $a,b>0,\,ab=n$, $0 \le c < b$. Consequently,
the number of such sublattices is $\sigma_1(n)$.

\item[(ii)] The sublattices of index $n$ in the $3$-dimensional
lattice $\ZZ^3$ are in one-to-one correspondence with the integer matrices $\bigl(\begin{smallmatrix} c & e & f \\
0 & b & d \\ 0 & 0 & a\end{smallmatrix}\bigr)$, where $a,b,c >0,
\,abc=n$, $0 \le d < b$ and $0 \le f,e < c$. Consequently, the
number of such sublattices is $\omega(n)$.
\end{itemize}
\end{proposition}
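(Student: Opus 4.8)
The plan is to deduce both parts from the Hermite normal form of integer matrices. The fact used is the following: if $\Lambda\le\ZZ^{k}$ has finite index, then among all $k\times k$ integer matrices whose columns form a basis of $\Lambda$ there is precisely one that is upper triangular with strictly positive diagonal entries $d_{1},\dots,d_{k}$ and whose entry $m_{ij}$ in row $i$, column $j>i$, satisfies $0\le m_{ij}<d_{i}$; moreover $[\ZZ^{k}:\Lambda]=d_{1}d_{2}\cdots d_{k}$. Conversely, every matrix of this shape is the column-basis matrix of a unique finite-index sublattice. Hence enumerating the index-$n$ sublattices of $\ZZ^{k}$ amounts to enumerating the admissible upper-triangular matrices of determinant $n$.

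Part (i) then follows at once. For $k=2$ the admissible matrices are exactly the $\bigl(\begin{smallmatrix} b & c \\ 0 & a\end{smallmatrix}\bigr)$ with $a,b>0$, $ab=n$ and $0\le c<b$; for each ordered factorization $ab=n$ there are $b$ permissible values of $c$, so the count is $\sum_{ab=n}b$. Since a sum over ordered factorizations of $n$ is unchanged when the two factors are interchanged, $\sum_{ab=n}b=\sum_{ab=n}a=\sigma_{1}(n)$, which is (i).

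For part (ii), with $k=3$ the admissible matrices are the $\bigl(\begin{smallmatrix} c & e & f \\ 0 & b & d \\ 0 & 0 & a\end{smallmatrix}\bigr)$ with $a,b,c>0$ and $abc=n$, the entry $d$ confined to $[0,b)$ (that is, $b$ choices) and the entries $e,f$ each confined to $[0,c)$ (that is, $c^{2}$ choices for the pair). So every ordered factorization $abc=n$ contributes $bc^{2}$ matrices, and the total is $\sum_{abc=n}bc^{2}$. A sum over ordered factorizations $abc=n$ is unchanged under permuting the roles of $a,b,c$; applying the permutation that interchanges the first and third factors replaces the summand $bc^{2}$ by $a^{2}b$, so $\sum_{abc=n}bc^{2}=\sum_{abc=n}a^{2}b=\omega(n)$, which is (ii).

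The point requiring genuine argument — the place where I expect the main work — is the existence and, above all, the uniqueness of the Hermite normal form, i.e.\ that the matrices listed above biject with the sublattices rather than give merely a surjective parametrization. I would prove this by induction on $k$: the top-left entry $d_{1}$ is forced to be the least positive first coordinate among the vectors of $\Lambda$ lying on the first coordinate axis, so the first column is the unique such vector $d_{1}e_{1}$; the image of $\Lambda$ under the projection forgetting the first coordinate is a sublattice of $\ZZ^{k-1}$ of index $d_{2}\cdots d_{k}$, whose Hermite normal form (unique by induction) fills in rows $2,\dots,k$ of the matrix; and the above-diagonal entries in the first row are determined modulo $d_{1}$, because a lift to $\Lambda$ of a basis vector of the projection is unique up to adding integer multiples of $d_{1}e_{1}$, hence pinned to $[0,d_{1})$. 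Once this is granted, what remains is the elementary counting above together with the identities $\sum_{ab=n}b=\sigma_{1}(n)$ and $\sum_{abc=n}bc^{2}=\omega(n)$, both of which are just the permutation-invariance of sums over ordered factorizations. Since the normal-form statement is entirely classical, in the paper I would cite a reference for it and only record the bookkeeping.
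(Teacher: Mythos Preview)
Your proposal is correct and, in spirit, matches the paper exactly: the paper does not prove this proposition at all but simply writes ``For the proof see, for example, (\cite{We2}, Proposition~1),'' i.e.\ defers to the classical Hermite-normal-form parametrization just as you do. Your sketch of existence/uniqueness of the normal form and the subsequent bookkeeping ($\sum_{ab=n} b=\sigma_1(n)$, $\sum_{abc=n} bc^2=\omega(n)$ via relabelling the ordered factorization) is accurate and is precisely the content of the cited reference, so there is nothing to add.
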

For the proof see, for example, (\cite{We2}, Proposition 1).

\begin{corollary}\label{number of mirror-preserved_in_Z^2}
Let $\;\ell: \ZZ^2 \mapsto \ZZ^2$ be an automorphism of $\ZZ^2$,
given by $\ell(u,v)=(u,-v)$. The sublattices $\Delta$ of index $n$
in the $2$-dimensional lattice $\ZZ^2$ such that
$\ell(\Delta)=\Delta$ are in one-to-one correspondence with the
matrices in the union of the two families of integer matrices,
$\bigl(\begin{smallmatrix} b & 0 \\
 0 & a\end{smallmatrix}\bigr)$, where $a,b>0,\,ab=n$, and $\bigl(\begin{smallmatrix} b & a/2 \\
 0 & a \end{smallmatrix}\bigr)$, where $a,b>0,\,ab=n$ and $a$ is even. Consequently,
the number of such sublattices is
$\sigma_0(n)+\sigma_0\left(\frac{n}{2}\right)$.
\end{corollary}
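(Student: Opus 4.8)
The plan is to start from the explicit classification in Proposition~\ref{number of sublattices}(i): a sublattice $\Delta$ of index $n$ in $\ZZ^2$ corresponds uniquely to a matrix $M=\bigl(\begin{smallmatrix} b & c \\ 0 & a\end{smallmatrix}\bigr)$ with $ab=n$, $a,b>0$, $0\le c<b$, meaning $\Delta$ is spanned by the columns (or rows, depending on convention) $(b,0)$ and $(c,a)$. The condition $\ell(\Delta)=\Delta$ is a linear condition on this data, so the first step is simply to translate it: applying $\ell(u,v)=(u,-v)$ to the generator $(c,a)$ gives $(c,-a)$, and we need this to lie back in $\Delta$. Writing $(c,-a)$ as an integer combination of $(b,0)$ and $(c,a)$ forces the coefficient of $(c,a)$ to be $-1$ (to match the second coordinate), hence $(c,-a)+(c,a)=(2c,0)$ must be an integer multiple of $(b,0)$, i.e.\ $b \mid 2c$. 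Since $0\le c<b$, this leaves exactly two possibilities: $c=0$, or $c=b/2$ with $b$ even.

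The second step is to repackage these two cases in the form stated. The case $c=0$ gives the diagonal matrices $\bigl(\begin{smallmatrix} b & 0 \\ 0 & a\end{smallmatrix}\bigr)$ with $ab=n$; there are $\sigma_0(n)$ of these. For the case $c=b/2$, I note that the off-diagonal entry stated in the corollary is $a/2$, not $b/2$, so here I would double-check the matrix convention being used in Proposition~\ref{number of sublattices}: under the row-vector convention where the rows $(b,c)$ and $(0,a)$ span $\Delta$, the same computation applied to the row $(b,c)$ (mapped by $\ell$ to $(b,-c)$) together with $(0,a)$ forces $a\mid 2c$, giving $c=0$ or $c=a/2$ with $a$ even, which is exactly the stated family $\bigl(\begin{smallmatrix} b & a/2 \\ 0 & a\end{smallmatrix}\bigr)$. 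So the computation is the same modulo transposing the roles of rows and columns; the only real content is being consistent with whichever convention Proposition~\ref{number of sublattices} fixes.

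The third step is the count. The diagonal family contributes one lattice for each ordered factorization $n=ab$, i.e.\ $\sigma_0(n)$ lattices. The family with $c=a/2$ requires $a$ even; writing $a=2a'$ we get $n=2a'b$, so these are in bijection with ordered factorizations of $n/2$, giving $\sigma_0(n/2)$ lattices (and zero if $n$ is odd, consistent with the convention that the empty sum vanishes). The two families are disjoint because a matrix in the second family has a nonzero off-diagonal entry $a/2\ge 1$. Hence the total is $\sigma_0(n)+\sigma_0(n/2)$, and the bijection with the displayed union of matrix families is immediate from Proposition~\ref{number of sublattices}(i) restricted to these $c$-values.

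I do not expect any genuine obstacle here; the statement is a one-line linear-algebra deduction from the already-cited Proposition~\ref{number of sublattices}(i) plus bookkeeping. The only point requiring a moment of care is fixing the matrix convention (rows versus columns, and hence whether the surviving shear parameter is $a/2$ or $b/2$) so that the final matrices match the corollary's statement verbatim; once that is pinned down, everything else is forced.
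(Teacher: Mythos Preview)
Your argument is correct and is exactly the intended deduction: the paper states this result as an immediate corollary of Proposition~\ref{number of sublattices}(i) without giving any proof, so there is nothing further to compare against. Your observation about the row/column convention is well taken---with the constraint $0\le c<b$ from Proposition~\ref{number of sublattices}(i) and the column convention, one literally obtains $c\in\{0,b/2\}$, whereas the corollary records the shear parameter as $a/2$; this is a harmless relabeling (and indeed when the corollary is invoked in the proof of Lemma~\ref{invariant_partial_G2_classes}, the generators are written as $(a,0)$ and $(a/2,b)$, matching your computation under the obvious swap of names). The count $\sigma_0(n)+\sigma_0(n/2)$ follows exactly as you say.
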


\begin{corollary}\label{number of mirror-preserved}
Let $\;\ell: \ZZ^3 \mapsto \ZZ^3$ be an automorphism of $\ZZ^3$,
given by $\ell(u,v,w)=(u,v,-w)$. Then the number of subgroups
$\Delta$ of index $n$ in $\ZZ^3$ such that $\ell(\Delta)=\Delta$ is
equal to $\sigma_2(n)+3\sigma_2\left(\frac{n}{2}\right)$.
\end{corollary}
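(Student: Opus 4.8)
The plan is to parametrize the index-$n$ sublattices $\Delta \subset \ZZ^3$ by upper-triangular Hermite normal form matrices $\bigl(\begin{smallmatrix} c & e & f \\ 0 & b & d \\ 0 & 0 & a\end{smallmatrix}\bigr)$ as in Proposition~\ref{number of sublattices}(ii), where the rows (or columns) span $\Delta$, and then impose the condition $\ell(\Delta)=\Delta$ for $\ell(u,v,w)=(u,v,-w)$. Since $\ell$ acts as $\mathrm{diag}(1,1,-1)$, a cleaner route is to split coordinates: write $\ZZ^3 = \ZZ^2 \oplus \ZZ$ with the $\ZZ^2$ factor carrying the $(u,v)$-coordinates fixed by $\ell$ and the last factor the $w$-coordinate negated by $\ell$. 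Let $\pi:\ZZ^3\to\ZZ^2$ be the projection killing $w$, and let $K = \Delta \cap (0\oplus\ZZ) = a\ZZ$ for some $a\ge 1$ (here $K$ is automatically $\ell$-invariant). Then $\pi(\Delta)=:L$ is a sublattice of $\ZZ^2$ of index $n/a$, and $\Delta$ is recovered from $L$, $a$, and a ``twist'' homomorphism $\phi:L\to \ZZ/a$ recording the $w$-component mod $a$ of a lift of each element of $L$. This is the standard extension/fibration description of sublattices.

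Next I would translate $\ell$-invariance of $\Delta$ into conditions on $(L,a,\phi)$. Since $\ell$ fixes $L$ pointwise (it only acts on $w$) and sends $K=a\ZZ$ to itself, $\ell(\Delta)=\Delta$ is equivalent to: for every $x\in L$, if $(x,t)\in\Delta$ then $(x,-t)\in\Delta$, i.e. $\phi(x) = -\phi(x)$ in $\ZZ/a$, i.e. $2\phi(x)=0$ for all $x\in L$. So $\phi$ must take values in the $2$-torsion subgroup of $\ZZ/a$, which is trivial when $a$ is odd and is $\{0,a/2\}\cong\ZZ/2$ when $a$ is even. Thus for odd $a$ the only choice is $\phi\equiv 0$, giving $\Delta = L\oplus a\ZZ$ and contributing $\sum_{a\ \mathrm{odd},\ a\mid n}\sigma_1(n/a)$ (one $\sigma_1(n/a)$ per valid $a$, by Proposition~\ref{number of sublattices}(i)); for even $a$ the twist $\phi$ ranges over $\Hom(L,\ZZ/2)$, and I would count pairs $(L,\phi)$ directly from the Hermite form of $L$, which adds the matrix entries $e,f\in\{0,c/2\}$-type freedom in the full $3\times 3$ description.

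The cleanest bookkeeping is probably to go back to the $3\times 3$ Hermite matrix and carry out the $\ell$-invariance condition there: $\ell(\Delta)=\Delta$ forces the $w$-column data $(f,d)$ and the pivot $a$ to satisfy $2f\equiv 0$, $2d\equiv 0$ appropriately, splitting into the case $a$ odd (forcing $d=f=0$, and then $e$ still free mod $c$, giving $\sigma_1(bc)=\sigma_1(n/a)$ choices — wait, need $e$ constrained too; actually $e$ is unconstrained since it sits in the $\ell$-fixed block, giving $\sigma_1$ of the $2$D index) versus $a$ even (where $d\in\{0,b/2\}$, etc.). Summing the resulting count over $abc=n$ and simplifying via the identities $\sum_{a\mid n}\sigma_1(n/a) = \sigma_2(n)$ and its variants restricted to odd/even $a$ should collapse to $\sigma_2(n)+3\sigma_2(n/2)$: the ``$3$'' will arise because once $a$ is even there are, up to the Hermite normalization, three independent binary twist choices (one from $d$ and essentially two from the pair $(e,f)$ interacting with the halving), each re-running the full $\sigma_2$ count after the substitution $n\mapsto n/2$. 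The main obstacle I anticipate is purely combinatorial: keeping the Hermite-form constraints $0\le d<b$, $0\le e,f<c$ consistent with the ``multiply/divide by $2$'' substitutions so that no sublattice is counted twice and none is missed — in other words, verifying that the even-$a$ contribution is exactly $3\sigma_2(n/2)$ rather than, say, $\sigma_2(n/2)+2\sigma_2(n/4)$ or something that only later telescopes. I would check this against small values of $n$ (e.g. $n=1,2,4,8$) to pin down the coefficient before writing the general argument.
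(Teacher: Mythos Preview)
Your extension/fibration setup is correct and is actually the cleanest route, but you abandon it one step too early. Once you have reduced $\ell$-invariance to the condition that the twist $\phi:L\to\ZZ/a$ takes values in the $2$-torsion of $\ZZ/a$, the count is immediate: since $\Delta$ has finite index in $\ZZ^3$, the projection $L=\pi(\Delta)$ is always a rank-$2$ lattice, i.e.\ $L\cong\ZZ^2$ regardless of which particular sublattice it is. Hence for even $a$ the set of admissible twists is $\Hom(\ZZ^2,\ZZ/2)$, which has exactly $4$ elements, independent of $L$. So the even-$a$ contribution is simply
\[
\sum_{\substack{a\mid n\\ a\ \mathrm{even}}} 4\,\sigma_1\!\left(\frac{n}{a}\right)
=4\sum_{a'\mid n/2}\sigma_1\!\left(\frac{n/2}{a'}\right)
=4\,\sigma_2\!\left(\frac{n}{2}\right),
\]
while the odd-$a$ contribution is $\sum_{a\mid n,\ a\ \mathrm{odd}}\sigma_1(n/a)=\sigma_2(n)-\sigma_2(n/2)$. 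Adding these gives $\sigma_2(n)+3\sigma_2(n/2)$ directly, with no Hermite-form case analysis and no risk of the $\sigma_2(n/4)$-type terms you were worried about. The detour back to the $3\times3$ matrix and the speculation about ``three independent binary twist choices'' is where the argument goes muddy; the entries $e,f$ live in the $\ell$-fixed block and are genuinely unconstrained, so trying to extract the factor of $3$ from them is the wrong picture.

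As for comparison: the paper does not prove this corollary here at all --- it simply cites \cite{We4}, Corollary~3. Your argument, once completed as above, is a self-contained proof and arguably more conceptual than a direct Hermite-matrix enumeration.
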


Proof in (\cite{We4}, Corollary 3).

In the next two propositions we  enumerate the subgroups $\Delta$ of
index $n$ in $\pi_1(\mathcal{G}_2)$ with $\Delta\cong
\pi_1(\mathcal{G}_2)$ and conjugacy classes of such subgroups. This
statements correspond to (\cite{We2}, Proposition 3).

\begin{proposition}\label{number of G2_in_G2}
The subgroups $\Delta$ of index $n$ in $\pi_1(\mathcal{G}_2)$
isomorphic to $\pi_1(\mathcal{G}_2)$ are in one-to-one
correspondence with the triples $(k,H,h)$, where
\begin{itemize}
\item $k$ is an odd positive divisor of $n$,
\item $H$ is a subgroup of index $\frac{n}{k}$ in $\ZZ^2$,
\item $h$ is a coset in $\ZZ^2/H$.
\end{itemize}
Consequently, the number of the above described subgroups is
$s_{\mathcal{G}_2,\mathcal{G}_2}(n)=\omega(n)-\omega\left(\frac{n}{2}\right)$.
\end{proposition}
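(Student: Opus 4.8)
The plan is to work directly with an explicit Bieberbach presentation of $\pi_1(\mathcal{G}_2)$ and exploit the structure of a short exact sequence $1 \to \ZZ^3 \to \pi_1(\mathcal{G}_2) \to \ZZ/2 \to 1$. Recall that $\mathcal{G}_2$ is the half-turn flat manifold, so $\pi_1(\mathcal{G}_2)$ is generated by three translations $t_1,t_2,t_3$ spanning a lattice $L\cong\ZZ^3$ together with an element $\alpha$ whose square is a translation and which acts on $L$ by a rotation of order $2$ fixing one axis (say the $t_3$-axis) and inverting the plane spanned by $t_1,t_2$. A subgroup $\Delta$ of index $n$ either lies inside $L$ (in which case it is $\ZZ^3$, not $\pi_1(\mathcal{G}_2)$) or surjects onto the $\ZZ/2$ quotient; since we want $\Delta\cong\pi_1(\mathcal{G}_2)$, we are in the second case. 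First I would show that such a $\Delta$ is determined by: (a) the translation lattice $M=\Delta\cap L$, which must be invariant under the order-$2$ rotation; and (b) the choice of a coset representative $\beta=\alpha^{k}v$ (for suitable odd $k$ and $v\in L$) generating $\Delta/M$, where $\beta^2\in M$ forces a congruence on $v$. The parameter $k$ arises as the index of $\langle\alpha^2\rangle$-related translation in the fixed direction: along the $t_3$-axis the subgroup of translations inside $\Delta$ has some index $k$ in $\langle t_3\rangle$, and for $\Delta$ to still be of type $\mathcal{G}_2$ (rather than picking up extra torsion or splitting) one checks $k$ must be odd.

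Next I would make the correspondence with triples $(k,H,h)$ precise. Given $\Delta$, set $k$ to be the odd divisor of $n$ described above; then $M$ projects to a sublattice $H$ of the $2$-dimensional lattice $\ZZ^2=L/\langle t_3\rangle$ of index $n/k$, and the coset of the translation part of the generator $\beta$ modulo $M$ descends to a well-defined coset $h\in\ZZ^2/H$ (the ambiguity in the $t_3$-direction is absorbed because $\beta^2$ already lies in $M$ with the right $t_3$-component, so only the $\ZZ^2$-part of $v$ is a free choice, and it is free modulo $H$). Conversely, from $(k,H,h)$ one reconstructs $M$ (as the preimage of $H$ intersected with the index-$k$ sublattice of $\langle t_3\rangle$, with the rotation-invariance automatic since the rotation acts trivially on the $t_3$-direction and by $-1$ on the plane, and both $\langle t_3\rangle$-sublattices and arbitrary $H$ are invariant under $\pm1$ on their respective factors) and then $\Delta=\langle M,\beta\rangle$ with $\beta$ chosen to realize $h$. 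One must verify this $\Delta$ is genuinely isomorphic to $\pi_1(\mathcal{G}_2)$ — i.e. that conjugation by $\beta$ acts on $M$ as the standard half-turn — which is immediate because $\beta$ acts by the same linear map $\alpha$ does, restricted to $M$, and $M$ is chosen rotation-invariant; and that $\beta^2\in M$, which is the single congruence condition that the odd-$k$ hypothesis guarantees can be met by an appropriate choice of the $t_3$-component of $v$.

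Finally I would count. For fixed odd $k\mid n$, the number of sublattices $H$ of index $n/k$ in $\ZZ^2$ weighted by the number of cosets $h\in\ZZ^2/H$ is $\sum_{H}[\ZZ^2:H] = \sum_{ab=n/k} b\cdot a = \sigma_2(n/k)$ — wait, more carefully, by Proposition~\ref{number of sublattices}(i) there are $\sigma_1(m)$ sublattices of index $m$, but summing $[\ZZ^2:H]=m$ over all of them gives $m\,\sigma_1(m)$, and then summing over the factorization $n=km$ with $k$ odd and recalling $\omega(n)=\sum_{ab=n}a\sigma_1(a)$ together with the identity $\sum_{k\mid n,\,k\text{ odd}} f(n/k)$-type manipulation yields exactly $\omega(n)-\omega(n/2)$. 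Concretely, $\sum_{k\text{ odd}}\big(\sum_{m:\,km=n} m\,\sigma_1(m)\big) = \sum_{k\text{ odd},\,km=n} m\,\sigma_1(m)$, and restricting the divisor sum over all $k$ to odd $k$ is precisely the operation $\omega(n)\mapsto\omega(n)-\omega(n/2)$ since the even-$k$ part is obtained by replacing $n$ with $n/2$. I expect the main obstacle to be the careful bookkeeping in step two: pinning down exactly why the coset $h$ is free \emph{modulo $H$} and not modulo something smaller, and why the $t_3$-translation length must be odd for the isomorphism type to remain $\mathcal{G}_2$ — this requires checking that an even $k$ would either force $\beta^2\notin M$ (so no subgroup) or produce a group with different holonomy. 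Everything else is a routine unwinding of the extension structure and an elementary divisor-sum identity.
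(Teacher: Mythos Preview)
Your approach is correct and will go through, but it is more circuitous than the route the paper's conventions point to. The paper does not prove this proposition here---it cites \cite{We2}---but the presentation in (\ref{all_relations}) and the later use of the invariants (see the proof of Lemma~\ref{invariant_partial_G2_classes}, where $k$ is the exponent of the twisting generator and $H_\Delta=\Delta\cap\langle y^2,z^2\rangle$) make clear that the intended extension is $1\to\ZZ^2\to\pi_1(\mathcal{G}_2)\to\ZZ\to 1$ with $\ZZ^2=\langle x,y\rangle$, not your $1\to\ZZ^3\to\pi_1(\mathcal{G}_2)\to\ZZ/2\to 1$. In that direct picture $k$ is simply the index of the image of $\Delta$ in $\ZZ$, $H=\Delta\cap\langle x,y\rangle$, and $h$ is the coset in $\ZZ^2/H$ of the $\langle x,y\rangle$-part of any lift $z^kw\in\Delta$. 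The oddness of $k$ is then immediate: $z^k$ acts on $\ZZ^2$ as $(-1)^k$, so $k$ odd is exactly the condition $\Delta\cong\pi_1(\mathcal{G}_2)$ rather than $\ZZ^3$, with no third-coordinate congruence to solve.

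Your detour through $L\cong\ZZ^3$ recovers the same triple, but the bookkeeping you flag as the main obstacle is genuinely heavier there. You must check that $M=\Delta\cap L$ splits as $H\times\langle t_3^k\rangle$ (this is what makes the projection to $\ZZ^2$ lossless and is why $h$ is well-defined modulo $H$ and nothing smaller); this does hold, because any $\beta\in\Delta\setminus L$ satisfies $\beta^2=t_3^{k}$ up to the choice of $v_3$, but you do not say so. Also, once $\Delta$ surjects onto $\ZZ/2$ the oddness of your $k$ is automatic---any $\beta\notin L$ already has odd $z$-exponent---so the even-$k$ scenario you worry about never occurs. Your final divisor-sum manipulation is correct once written cleanly: $\sum_{k\mid n,\ k\text{ odd}}(n/k)\,\sigma_1(n/k)=\omega(n)-\omega(n/2)$, the subtracted term being the $k$-even part after the substitution $k\mapsto 2k'$.
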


\begin{proposition}\label{number of conjugacy_classes_G2_in_G2}
The conjugacy classes of subgroups $\Delta$ of index $n$ in
$\pi_1(\mathcal{G}_2)$ isomorphic to $\pi_1(\mathcal{G}_2)$ are in
one-to-one correspondence with the triples $(k,H,\bar{h})$, where
\begin{itemize}
\item $k$ is an odd positive divisor of $n$,
\item $H$ is a subgroup of index $\frac{n}{k}$ in $\ZZ^2$,
\item $\bar{h}$ is a coset in $\ZZ^2/\langle H, (2,0), (0,2) \rangle$.
\end{itemize}
Consequently, the number of conjugacy classes of the above described
subgroups is
$c_{\mathcal{G}_2,\mathcal{G}_2}(n)=\sigma_2(n)+2\sigma_2\left(\frac{n}{2}\right)-3\sigma_2\left(\frac{n}{4}\right)$.
\end{proposition}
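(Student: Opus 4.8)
The plan is to build on Proposition~\ref{number of G2_in_G2}, which already parametrises the relevant subgroups $\Delta$ by triples $(k,H,h)$ with $k\mid n$ odd, $H\le\ZZ^2$ of index $n/k$, and $h\in\ZZ^2/H$. The remaining task is purely to count the orbits of the conjugation action of $\pi_1(\mathcal{G}_2)$ on this set of triples and to identify the orbit space with the set of triples $(k,H,\bar h)$ described in the statement. First I would recall the standard presentation of $\pi_1(\mathcal{G}_2)$ as a split extension $1\to\ZZ^3\to\pi_1(\mathcal{G}_2)\to\ZZ/2\to 1$, with a generator $t$ acting on the translation lattice $\ZZ^3=\langle e_1,e_2,e_3\rangle$ by $e_1\mapsto e_1$, $e_2\mapsto -e_2$, $e_3\mapsto -e_3$ and $t^2=e_1$ (the precise form being the one used in \cite{We2}); here the $\ZZ^2$ appearing in the triple is the sublattice $\langle e_2,e_3\rangle$ on which $t$ acts by $-1$, and the $k$ records the index of $\Delta$ in the preimage of the corresponding subgroup of the $e_1$-direction.

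Next I would analyse how conjugation by the two types of generators moves a triple. Conjugation by an element of the translation subgroup $\ZZ^3$ fixes $k$ and $H$ (lattices are normal under translation) and translates the coset $h$; tracking the $t$-twist shows that conjugation by $e_1$ shifts $h$ trivially while conjugation by $e_2$ and $e_3$ shifts $h$ by $(2,0)$ and $(0,2)$ respectively — this is exactly why the quotient in the statement is by $\langle H,(2,0),(0,2)\rangle$ rather than by $H$ alone. Conjugation by the order-two generator $t$ sends $H$ to $\ell(H)$ where $\ell$ is the involution $(u,v)\mapsto(-u,-v)$ on $\ZZ^2$; but $-1$ preserves every sublattice of $\ZZ^2$, so $H$ is unchanged, and on $h$ it acts by $h\mapsto -h$ plus a bounded correction coming from $t^2=e_1$, which again lands inside $\langle(2,0),(0,2)\rangle$. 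Combining these, the full conjugacy action on triples fixes $k$ and $H$ and acts on $h$ through the finite group $\ZZ^2/\langle H,(2,0),(0,2)\rangle$ by translations (the $t$-contribution being absorbed), so the orbits are precisely the triples $(k,H,\bar h)$ claimed.

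The final step is the arithmetic bookkeeping: summing $1$ over all admissible $(k,H,\bar h)$. For fixed odd $k$ and fixed $m=n/k$, the number of sublattices $H$ of index $m$ in $\ZZ^2$ weighted by $\bigl|\ZZ^2/\langle H,(2,0),(0,2)\rangle\bigr|$ is a divisor-type sum; one shows it equals $\sigma_1$ of something adjusted at the prime $2$, and then summing over odd $k\mid n$ converts $\sigma_1$-sums into $\sigma_2$-sums. The clean way is to compute the Dirichlet series: the generating function of $H\mapsto\bigl|\ZZ^2/\langle H,(2,0),(0,2)\rangle\bigr|$ over index-$m$ sublattices has an explicit Euler product (odd primes contribute the $\sigma_1$ factor $\zeta(s)\zeta(s-1)$, the prime $2$ contributes a modified local factor reflecting the saturation by $(2,0),(0,2)$), multiplying by the odd-$k$ factor $\prod_{p\ \mathrm{odd}}(1-p^{-s})^{-1}$ and matching with the Dirichlet series of $\sigma_2(n)+2\sigma_2(n/2)-3\sigma_2(n/4)$. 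I expect the main obstacle to be precisely this prime-$2$ local computation: one must carefully determine, for a sublattice $H$ of index a power of $2$, the index of $\langle H,(2,0),(0,2)\rangle$ in $\ZZ^2$ as a function of the Hermite normal form of $H$, and check that the resulting local factor at $2$ is exactly the one that produces the coefficients $1,2,-3$ at $2^0,2^1,2^2$. Everything else — the orbit-counting and the odd-part of the Dirichlet series — is routine given \cite{We2} and Proposition~\ref{number of sublattices}(i).
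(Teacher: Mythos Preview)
The paper does not actually contain a proof of this proposition: it is quoted in the Preliminaries as a result from the earlier paper \cite{We2} (Proposition~3 there), so there is no in-paper argument to compare your sketch against.

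On its own merits your outline is correct. Two minor clarifications. First, the element you call $t$ is not of order two in $\pi_1(\mathcal{G}_2)$; only its image modulo the translation lattice is, and in the concrete presentation $\langle x,y,z\mid xy=yx,\ x^z=x^{-1},\ y^z=y^{-1}\rangle$ used here one simply has $t=z$, $e_1=z^2$, $e_2=x$, $e_3=y$. Second, conjugation by $z$ sends $Z_\Delta=z^kx^ay^b$ to $z^kx^{-a}y^{-b}$ exactly, so $h\mapsto -h$ with no correction term (the relation $t^2=e_1$ contributes nothing to the $\langle e_2,e_3\rangle$-coordinate). Your absorption argument $2h\in\langle(2,0),(0,2)\rangle$ is nonetheless the right way to see that the $z$-conjugation produces no new orbits beyond those already obtained from the $x$- and $y$-translations, so the bijection with triples $(k,H,\bar h)$ is established exactly as you say.

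For the enumeration, your plan via the local factor at $2$ is fine; alternatively one can do it by hand. With $H$ in Hermite form $\bigl(\begin{smallmatrix} b & c \\ 0 & a\end{smallmatrix}\bigr)$, $ab=m$, $0\le c<b$, the quantity $|\ZZ^2/\langle H,(2,0),(0,2)\rangle|$ is $1$ if $a,b$ are both odd, $4$ if $a,b,c$ are all even, and $2$ otherwise; summing over all $H$ of index $m$ and then over odd $k\mid n$ with $m=n/k$ collapses directly to $\sigma_2(n)+2\sigma_2(n/2)-3\sigma_2(n/4)$ after a short parity case-split. Either route works.
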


\section{The structure of the groups $\pi_{1}(\mathcal{G}_{2})$ and $\pi_{1}(\mathcal{G}_{6})$ }
The groups $\pi_{1}(\mathcal{G}_{1})$, $\pi_{1}(\mathcal{G}_{2})$
and $\pi_{1}(\mathcal{G}_{6})$ is given by generators and relations
in the following way
\begin{equation}\label{all_relations}
\begin{aligned}
\pi_1(\mathcal{G}_1)&=\ZZ^3=\langle x,y,z:
xyx^{-1}y^{-1}=xzx^{-1}z^{-1}=yzy^{-1}z^{-1}=1\rangle,\\
\pi_1(\mathcal{G}_2)&=\langle x,y,z: xyx^{-1}y^{-1}=1,
x^z=x^{-1},y^z=y^{-1}\rangle,\\
\pi_{1}(\mathcal{G}_{6})&=\langle x, y, z:
xy^2x^{-1}y^2=yx^2y^{-1}x^2=xyz=1 \rangle .
\end{aligned}
 \end{equation}
See \cite{Wolf} or \cite{Conway}.

{\bf Remark.} The above representation of the group
$\pi_{1}(\mathcal{G}_{6})$ is indeed symmetric with respect to
permutations of $x$, $y$ and $z$. The relations
$xz^2x^{-1}z^2=yz^2y^{-1}z^2=zx^2z^{-1}x^2=zy^2z^{-1}y^2=1$ follow
from given above.

Next proposition provides the canonical form of an element in
$\pi_{1}(\mathcal{G}_{6})$.

\begin{proposition}\label{propG6-1}
\begin{itemize}
\item[(i)] Each element of $\pi_{1}(\mathcal{G}_{6})$ can be represented in the canonical form
$g_ix^{2a}y^{2b}z^{2c}$, where $g_i\in\{1,x,y,z\}$ and $a,b,c$ are
some integers.
\item[(ii)] The subgroup $\langle x^2,y^2,z^2\rangle$ is normal in
$\pi_{1}(\mathcal{G}_{6})$ and isomorphic to $\ZZ^3$.
\item[(iii)] The following relations holds:
\begin{equation}\label{multlawG6-1}
\begin{aligned}
x^{2a}y^{2b}z^{2c}\cdot x = x\cdot x^{2a}y^{-2b}z^{-2c},\\
x^{2a}y^{2b}z^{2c}\cdot y = y\cdot x^{-2a}y^{2b}z^{-2c},\\
x^{2a}y^{2b}z^{2c}\cdot z = z\cdot x^{-2a}y^{-2b}z^{2c}.
\end{aligned}
\end{equation}
\item[(iv)] The product $g_ig_j:\; g_i,g_j \in \{1,x,y,z\}$ is given
by Table~1. \hfil\hfil \linebreak
\begin{center}
\begin{tabular}{|l|l|l|l|l|}
\hline $g_ig_j$ & $g_j=1$ & $g_j=x$ & $g_j=y$ & $g_j=z$ \\
\hline $g_i=1$ & $1$ & $x$ & $y$ & $z$ \\
\hline $g_i=x$ & $x$ & $1\cdot x^2$ & $z\cdot z^{-2}$ & $y\cdot x^{-2}z^2$ \\
\hline $g_i=y$ & $y$ & $z\cdot x^2y^{-2}$ & $1\cdot y^2$ & $x\cdot x^{-2}$ \\
\hline $g_i=z$ & $z$ & $y\cdot y^{-2}$ & $x\cdot y^2z^{-2}$ & $1\cdot z^2$ \\
\hline
\end{tabular}
\linebreak\small{Table~1}
\end{center}
\item[(v)] The representation in the canonical form $w=g_ix^ay^bz^c$ for
each element $w\in \pi_{1}(\mathcal{G}_{6})$ is unique.
\end{itemize}
\end{proposition}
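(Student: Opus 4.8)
The plan is to derive parts (i)--(v) almost entirely from the presentation \eqref{all_relations}, invoking the concrete Bieberbach realisation only at the end to exclude hidden relations. First I would establish the commutation rules. From $xy^{2}x^{-1}y^{2}=1$ one reads off $xy^{2}x^{-1}=y^{-2}$, hence also $x^{-1}y^{2}x=y^{-2}$, and symmetrically $y^{\pm1}x^{2}y^{\mp1}=x^{-2}$ from the second relation of \eqref{all_relations}. Substituting $z=(xy)^{-1}$ (from $xyz=1$) into these and simplifying produces the remaining conjugation identities $z^{\pm1}x^{2}z^{\mp1}=x^{-2}$, $z^{\pm1}y^{2}z^{\mp1}=y^{-2}$, $x^{\pm1}z^{2}x^{\mp1}=z^{-2}$, $y^{\pm1}z^{2}y^{\mp1}=z^{-2}$ --- exactly the assertion of the Remark following \eqref{all_relations}. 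Rewriting $g^{-1}u^{2}g=u^{-2}$ as $u^{2}g=gu^{-2}$ and collecting the cases $g\in\{x,y,z\}$ gives the three lines of \eqref{multlawG6-1}, which is part (iii); moreover $yx^{2}y^{-1}=x^{-2}$ implies $y^{2}x^{2}y^{-2}=x^{2}$, so $[x^{2},y^{2}]=1$, and cyclically $[y^{2},z^{2}]=[z^{2},x^{2}]=1$.

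Next I would analyse the subgroup $L=\langle x^{2},y^{2},z^{2}\rangle$. By the conjugation rules each of $x,y,z$ carries $x^{2},y^{2},z^{2}$ to $\pm$ itself, so $L$ is normal; by the commutator identities $L$ is abelian, hence a quotient of $\ZZ^{3}$, and in particular every element of $L$ is of the form $x^{2a}y^{2b}z^{2c}$. In $\pi_{1}(\mathcal{G}_{6})/L$ the images $\bar x,\bar y,\bar z$ satisfy $\bar x^{2}=\bar y^{2}=\bar z^{2}=1$ and $\bar x\bar y\bar z=1$, so this quotient is a quotient of the Klein four-group and is covered by the four cosets $L,\,xL,\,yL,\,zL$ (note $z\equiv xy\bmod L$). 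Hence $\pi_{1}(\mathcal{G}_{6})=\{1,x,y,z\}\cdot L$, which together with the normal form of elements of $L$ proves (i). Part (iv) is then a finite check: each product $g_{i}g_{j}$ lies in a unique coset $g_{k}L$, and $g_{k}^{-1}g_{i}g_{j}\in L$ is reduced to the monomial in Table~1 using $z^{-1}=xy$ and the rules of (iii) --- for instance $xy=z\cdot(z^{-1}xy)=z\cdot(xy)^{2}=z\cdot z^{-2}$, and the remaining eight entries are of the same one-line type.

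Finally, to obtain $L\cong\ZZ^{3}$ and the uniqueness statement (v), I would introduce the homomorphism $\varphi\colon\pi_{1}(\mathcal{G}_{6})\to\mathrm{Isom}(\mathbb{R}^{3})$ defined on generators by an appropriate labelling of the Bieberbach generators, e.g.\ $x\mapsto S_{3}$, $y\mapsto S_{2}$, $z\mapsto S_{1}$; one checks directly that the three relations of \eqref{all_relations} are satisfied by these isometries, so $\varphi$ is well defined with image the Bieberbach group $\langle S_{1},S_{2},S_{3}\rangle$. Computing $\varphi(x^{2}),\varphi(y^{2}),\varphi(z^{2})$ gives the translations by $(0,0,2),(0,2,0),(2,0,0)$, which span a rank-$3$ lattice; as $L$ is a quotient of $\ZZ^{3}$ mapping onto this lattice, $\varphi$ restricted to $L$ is an isomorphism $L\to\ZZ^{3}$, which finishes (ii). Since $\varphi(x),\varphi(y),\varphi(z)$ have pairwise distinct non-identity linear parts, none of them is a translation, so the cosets $L,xL,yL,zL$ have distinct images and $[\pi_{1}(\mathcal{G}_{6}):L]=4$ (and $\varphi$ is injective). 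Uniqueness in (v) then follows: the $L$-coset of $w$ determines $g_{i}$, and after cancelling $g_{i}$ the remaining element lies in $L\cong\ZZ^{3}$, in which the exponent triple $(a,b,c)$ is unique. The only step that is not pure manipulation of \eqref{all_relations} is this last one --- proving $L\cong\ZZ^{3}$ and $[\pi_{1}(\mathcal{G}_{6}):L]=4$, i.e.\ ruling out hidden relations among $x^{2},y^{2},z^{2}$ --- and I expect the main (modest) difficulty to be fixing the correct dictionary between $x,y,z$ and words in $S_{1},S_{2},S_{3}$ and verifying the defining relations for it.
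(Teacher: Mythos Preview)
Your derivation of parts (i)--(iv) is essentially the same as the paper's: both simply say these follow by direct manipulation of the relations in \eqref{all_relations}, and your write-up fills in the details the paper omits.

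Where you genuinely diverge is in the proof of uniqueness (v) and of $L\cong\ZZ^{3}$. The paper argues purely algebraically: it takes the set of formal expressions $g_{i}x^{2a}y^{2b}z^{2c}$, defines a multiplication on it via the rules of (iii) and (iv), verifies directly that this makes the set into a group $G$, and then observes that $\pi_{1}(\mathcal{G}_{6})$ is simultaneously a quotient of $G$ (since (iii), (iv) hold there) and has $G$ as a quotient (since the defining relations of \eqref{all_relations} hold in $G$), so the two are isomorphic. You instead invoke the concrete Bieberbach realisation: the homomorphism $\varphi$ to $\mathrm{Isom}(\mathbb{R}^{3})$ sending $x,y,z$ to $S_{3},S_{2},S_{1}$ takes $x^{2},y^{2},z^{2}$ to three independent translations, so $\varphi|_{L}$ is injective and $L\cong\ZZ^{3}$; the linear parts then separate the four cosets. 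Both routes are correct. The paper's method is self-contained within the presentation and needs no geometric input, at the price of a case-check of associativity on the normal forms; your method trades that verification for the (shorter) check that $S_{1},S_{2},S_{3}$ satisfy the relations of \eqref{all_relations}, but imports the explicit isometry model. Your dictionary $x\mapsto S_{3}$, $y\mapsto S_{2}$, $z\mapsto S_{1}$ is indeed one that makes $xyz=1$ and the two square relations hold, so the anticipated ``main (modest) difficulty'' does not materialise.
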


\begin{proof}
Items (i--iv) follow routinely from the representation
(\ref{all_relations}) of the group $\pi_{1}(\mathcal{G}_{6})$. To
prove (v) consider the set $G$ of all the expressions
$g_ix^{2a}y^{2b}z^{2c}$, where $g_i\in\{1,x,y,z\}$ and $a,b,c$ are
some integers. Define the multiplication by concatenation and
further reduction to the described above form through the relations
(iii) and (iv). Direct verification shows that $G$ is a group with
respect to this operation. Since the relations (iii) and (iv) are
derived from the relations of the group $\pi_{1}(\mathcal{G}_{6})$,
this group is a factor group of the group $G$. By the other hand,
one can verify that the relations of $\pi_{1}(\mathcal{G}_{6})$
holds in $G$, thus $G \cong \pi_{1}(\mathcal{G}_{6})$. In
particular, different canonical representations represent different
elements of $\pi_{1}(\mathcal{G}_{6})$.
\end{proof}

{\bf Notations.} Denote the subgroup $\langle x^2,y^2,z^2\rangle
\lhd \pi_{1}(\mathcal{G}_{6})$ by $\Lambda$. Also denote the natural
by homomorphism, of factorization $\pi_{1}(\mathcal{G}_{6}) \to
\pi_{1}(\mathcal{G}_{6})/\Lambda$ by $\phi$.

\begin{definition}
Let $g$ be an element of $\pi_{1}(\mathcal{G}_{6})$. In case
$g=x^{2a}y^{2b}z^{2c}$ we say that $g$ {\em has exponents} $2a$,
$2b$, $2c$ at $x$, $y$, $z$ respectively. In case
$g=zx^{2a}y^{2b}z^{2c}$ we say the respective exponents are $2a$,
$2b$, $2c+1$. Similarly in cases $g=x\cdot x^{2a}y^{2b}z^{2c}$ and
$g=yx^{2a}y^{2b}z^{2c}$. We denote the exponents of $g$ at $x$, $y$,
$z$ by $exp_x(g)$, $exp_y(g)$, $exp_z(g)$ respectively.
\end{definition}

We widely use the following statement, too trivial to be a lemma.
Let $g,h$ be some elements and $exp_y(g)$, $exp_z(g)$, $exp_y(h)$,
$exp_z(h)$ are even. Than $exp_x(gh)=exp_x(g)+exp_x(h)$.

Note that $\pi_{1}(\mathcal{G}_{6})/\Lambda \cong \ZZ_2^2$,
therefore there are only three possible isomorphism types of a
subgroup in $\ZZ_2^2$, it is either trivial, or $\ZZ_2$, or
$\ZZ_2^2$.

\begin{definition}\label{def_X_Delta}
Let $\Delta$ be a subgroup of finite index in
$\pi_{1}(\mathcal{G}_{6})$. In case $\phi(\Delta)=1$  by $X_\Delta,
Y_\Delta, Z_\Delta$ we refer to an arbitrary triple of generators of
$\Delta$. If $\phi(\Delta)=\{1,x\}$  by $Z_\Delta$ denote an
arbitrary element of $\Delta$ with the minimal positive odd exponent
at $x$, and by $X_\Delta, Y_\Delta$ denote an arbitrary pair of
generators of $\Delta \cap \langle x^2,y^2\rangle$. Similarly, in
case $\phi(\Delta)=\{1,y\}$ and $\phi(\Delta)=\{1,z\}$ ($Z_\Delta$
 denote an element with the minimal positive odd exponent at
$y$ and $z$ respectively). Finely, in case
$\phi(\Delta)=\{1,x,y,z\}$ by $X_\Delta, Y_\Delta, Z_\Delta$ denote
an arbitrary element with minimal positive odd exponent at $x$, $y$,
$z$ respectively.
\end{definition}

\begin{proposition}\label{G6_classification}
Let $\Delta$ be a subgroup of finite index in
$\pi_{1}(\mathcal{G}_{6})$. Then $\Delta$ has one of the following
three isomorphism types, defined by $|\phi(\Delta)|$. Subgroup
$\Delta$ is isomorphic to $\ZZ^3$, $\pi_{1}(\mathcal{G}_{2})$ and
$\pi_{1}(\mathcal{G}_{6})$ in case $|\phi(\Delta)|=1$,
$|\phi(\Delta)|=2$ and $|\phi(\Delta)|=4$ respectively. In all cases
$\Delta$ is generated by elements $X_\Delta, Y_\Delta, Z_\Delta$.
%
%
\end{proposition}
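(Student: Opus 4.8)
The plan is to analyze the three cases $|\phi(\Delta)|=1,2,4$ separately, using the canonical-form machinery of Proposition~\ref{propG6-1} and the multiplication rules \eqref{multlawG6-1}--Table~1 throughout. First I would dispose of the case $|\phi(\Delta)|=1$: here $\Delta \subseteq \Lambda = \langle x^2,y^2,z^2\rangle \cong \ZZ^3$ by Proposition~\ref{propG6-1}(ii), so $\Delta$ is a finite-index subgroup of $\ZZ^3$ and hence isomorphic to $\ZZ^3$; its generators can be taken as $X_\Delta,Y_\Delta,Z_\Delta$ by Definition~\ref{def_X_Delta}. This is essentially immediate. The heart of the argument is the case $|\phi(\Delta)|=4$, i.e. $\phi(\Delta)=\{1,x,y,z\}$.

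For $|\phi(\Delta)|=4$, I would pick $X_\Delta,Y_\Delta,Z_\Delta$ as in Definition~\ref{def_X_Delta}: elements of $\Delta$ with minimal positive odd exponent at $x$, $y$, $z$ respectively. Using \eqref{multlawG6-1} and Table~1 one checks that conjugation within $\Delta$ and the relations $xy^2x^{-1}y^2 = yx^2y^{-1}x^2 = xyz = 1$ transport to analogous relations among $X_\Delta,Y_\Delta,Z_\Delta$ after suitable powers; concretely, I expect to show $X_\Delta, Y_\Delta, Z_\Delta$ satisfy $X_\Delta Y_\Delta^2 X_\Delta^{-1} Y_\Delta^2 = Y_\Delta X_\Delta^2 Y_\Delta^{-1} X_\Delta^2 = 1$ and that $X_\Delta Y_\Delta Z_\Delta \in \Lambda$, with $\Delta$ generated by these three elements together with $\Delta \cap \Lambda$, and that $\Delta \cap \Lambda$ is absorbed into the subgroup they generate (since the squares $X_\Delta^2, Y_\Delta^2, Z_\Delta^2$ lie in $\Lambda$ and, by minimality of the odd exponents, generate $\Delta \cap \Lambda$ together with the commutator-type elements). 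Then the presentation \eqref{all_relations} of $\pi_1(\mathcal{G}_6)$ is satisfied by $X_\Delta,Y_\Delta,Z_\Delta$, giving a surjection $\pi_1(\mathcal{G}_6)\twoheadrightarrow\Delta$; since $\pi_1(\mathcal{G}_6)$ is Hopfian (it is a Bieberbach group, hence residually finite and finitely generated) and the subgroup has finite index, one concludes $\Delta \cong \pi_1(\mathcal{G}_6)$. Alternatively, and perhaps more cleanly, one exhibits an explicit isomorphism by mapping the standard generators of $\pi_1(\mathcal{G}_6)$ to $X_\Delta,Y_\Delta,Z_\Delta$ and checks it is well-defined and bijective using the canonical form of Proposition~\ref{propG6-1}(v).

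For $|\phi(\Delta)|=2$, say $\phi(\Delta)=\{1,x\}$ (the other two cases are symmetric via the $x\leftrightarrow y\leftrightarrow z$ symmetry noted after \eqref{all_relations}), I would set $Z_\Delta$ to be an element of minimal positive odd $x$-exponent and $X_\Delta,Y_\Delta$ a pair of generators of $\Delta\cap\langle x^2,y^2\rangle$. From \eqref{multlawG6-1} one reads off that conjugation by $Z_\Delta$ acts on $\langle x^2,y^2\rangle$ (note $exp_x, exp_y$ are even there) essentially as $x^2\mapsto x^2$, $y^2\mapsto y^{-2}$ up to a translation in $\Lambda$, so after adjusting $X_\Delta,Y_\Delta$ we get $X_\Delta^{Z_\Delta}=X_\Delta$, $Y_\Delta^{Z_\Delta}=Y_\Delta^{-1}$ and $[X_\Delta,Y_\Delta]=1$, which is exactly the presentation of $\pi_1(\mathcal{G}_2)$ in \eqref{all_relations}. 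One then verifies $\Delta = \langle X_\Delta, Y_\Delta, Z_\Delta\rangle$: any element of $\Delta$ has even $x$-exponent iff it lies in $\Delta\cap\langle x^2,y^2,z^2\rangle$, and $z^2$-powers are recovered since $Z_\Delta^2\in\Lambda$ and odd-$x$-exponent elements differ from multiples of $Z_\Delta$ by elements of $\Delta\cap\Lambda$; a counting/index argument via $\phi$ closes it. The main obstacle I anticipate is \emph{bookkeeping the extra $\Lambda$-translations} produced by the non-commutative multiplication (Table~1 entries like $x\cdot x = 1\cdot x^2$, $y\cdot x = z\cdot x^2y^{-2}$): each product of canonical generators picks up a shift in $x^2,y^2,z^2$, and one must verify these shifts can be simultaneously absorbed by re-choosing $X_\Delta,Y_\Delta,Z_\Delta$ within their cosets so that the target group's relations hold \emph{on the nose}, not merely modulo $\Lambda$ — this is where the minimal-exponent choice in Definition~\ref{def_X_Delta} does the real work, and where care is needed to ensure the chosen generators still generate all of $\Delta\cap\Lambda$.
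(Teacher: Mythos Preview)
Your overall strategy matches the paper's: split by $|\phi(\Delta)|$, use the canonical form, pick generators via Definition~\ref{def_X_Delta}, verify the defining relations of the target group, and check the map $x\mapsto X_\Delta$, $y\mapsto Y_\Delta$, $z\mapsto Z_\Delta$ is an isomorphism. Two concrete points need repair.

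First, the Hopfian argument in the $|\phi(\Delta)|=4$ case does not do what you want. Hopficity says every \emph{surjective} endomorphism of $\pi_1(\mathcal{G}_6)$ is injective, but the composite $\pi_1(\mathcal{G}_6)\twoheadrightarrow\Delta\hookrightarrow\pi_1(\mathcal{G}_6)$ is \emph{not} surjective when $\Delta$ is a proper subgroup, so Hopficity gives nothing. What actually forces injectivity is a rank (Hirsch length) argument: $\pi_1(\mathcal{G}_6)$ is torsion-free and virtually $\ZZ^3$, so any nontrivial kernel is infinite and the quotient is virtually $\ZZ^{\le 2}$, whereas $\Delta$ (finite index) is virtually $\ZZ^3$. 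The paper avoids this entirely and proves injectivity directly from uniqueness of the canonical form (your ``alternative''), which is the cleaner route; in particular it needs the intermediate Lemma that minimality of the odd exponents $m,k,\ell$ forces $m\mid \exp_x(g)$, etc., and then an explicit adjustment $X_\Delta\mapsto X_\Delta Y_\Delta^{2i}$ to achieve $X_\Delta Y_\Delta Z_\Delta=1$ \emph{on the nose}.

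Second, your $|\phi(\Delta)|=2$ computation is off. With $\phi(\Delta)=\{1,x\}$ the element $Z_\Delta$ has odd $x$-exponent, so by \eqref{multlawG6-1} conjugation by $Z_\Delta$ fixes $x^2$ and inverts $y^2,z^2$. Taking $X_\Delta,Y_\Delta$ in $\langle x^2,y^2\rangle$ (as Definition~\ref{def_X_Delta} literally says) therefore yields $X_\Delta^{Z_\Delta}=X_\Delta$, $Y_\Delta^{Z_\Delta}=Y_\Delta^{-1}$, which is \emph{not} the presentation of $\pi_1(\mathcal{G}_2)$ (you need both generators inverted), and worse, since $Z_\Delta^2=x^{2m}$ the subgroup $\langle X_\Delta,Y_\Delta,Z_\Delta\rangle$ then lies in $\langle x,y^2\rangle$ and misses every element of $\Delta$ with nonzero $z$-exponent. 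The paper's proof in fact takes $X_\Delta,Y_\Delta$ to generate $\Delta\cap\langle y^2,z^2\rangle$ (Definition~\ref{def_X_Delta} has a typo here); with that choice both are inverted by $Z_\Delta$, and every $g\in\Delta$ reduces by a power of $Z_\Delta$ into $\Delta\cap\langle y^2,z^2\rangle$, giving generation and the correct $\pi_1(\mathcal{G}_2)$ relations.
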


\begin{proof}
In case $|\phi(\Delta)|=1$ the triple $X_\Delta, Y_\Delta, Z_\Delta$
generates $\Delta$ by definition. Also, $\Delta$ is a subgroup of
$\Lambda \cong \ZZ^3$, thus $\Delta$ is a free abelian group. Since
$\Delta$ has finite index in $\Lambda$, we have $\Delta \cong
\ZZ^3$.

In case $|\phi(\Delta)|=2$ without loss of generality assume that
$\phi(\Delta)=\{\phi(1), \phi(x)\}$. Denote the exponent of
$Z_\Delta$ at $x$ by $m$. Then for each $g \in \Delta$ its exponent
at $x$ is a multiple of $m$. Otherwise multiplying either $g$ or
$g^{-1}$ by the convenient power of $Z_\Delta$ we get an element of
$\Delta$ with an odd exponent at $x$ strictly between $0$ and $m$,
which is the contradiction with the definition of $Z_\Delta$.

To prove that $X_\Delta, Y_\Delta, Z_\Delta$ generate $\Delta$
consider an arbitrary element $g \in \Delta$. Since its exponent at
$x$ is divisible by $m$, $gZ_\Delta^k \in \Delta \cap \langle y^2,
z^2\rangle $ for some $k$. Further, $\Delta \cap \langle y^2,
z^2\rangle$ is generated by $X_\Delta, Y_\Delta$ by virtue of their
definition. We claim that different expressions of the form
$X_\Delta^aY_\Delta^bZ_\Delta^c$ represent different elements $g
\in\Delta$. Indeed, the exponent of $g$ at $x$ uniquely determines
$c$, and $\Delta \cap \langle y^2, z^2\rangle \cong \ZZ^2$, so
different pairs $(a,b)$ provide different elements
$X_\Delta^aY_\Delta^b$.

Note that the elements $X_\Delta, Y_\Delta, Z_\Delta$ yield the
relations of the group $\pi_{1}(\mathcal{G}_{2})$ hold for $x,y,z$,
so we build the isomorphism $\pi_{1}(\mathcal{G}_{2}) \to \Delta$,
given by $x \mapsto X_\Delta$, $y \mapsto Y_\Delta$, $z \mapsto
Z_\Delta$.

In case $|\phi(\Delta)|=4$ we set:
$X_\Delta=x^my^{2r}z^{2s}$, $Y_\Delta=y^kx^{2t}z^{2u}$ and
$Z_\Delta=z^\ell x^{2v}y^{2w}$. Here $m, k, \ell, r,s,t,u,v,w$ are
integers, moreover $m, k, \ell$ are odd positives.

To prove that $\Delta$ is generated by $X_\Delta, Y_\Delta,
Z_\Delta$ do the following.

\begin{lemma} \label{auxiliary statement in prop 1} If for some element $g \in \Delta$ the numbers $exp_y(g)$
and $exp_z(g)$ are even, then $m \mid exp_x(g)$. \end{lemma}

The proof is similar to the case  $|\phi(\Delta)|=2$. Analogous
statements holds for any permutation of $x,y,z$.

Resume to the prove of \Cref{G6_classification}. Note that
$X_\Delta^2=x^{2m}$, $Y_\Delta^2=y^{2k}$ and $Z_\Delta^2=x^{2\ell}$.
By \Cref{auxiliary statement in prop 1} if for some $g \in \Delta$
all three numbers $exp_x(g)$, $exp_y(g)$, $exp_z(g)$ are even, then
they are divisible by $2m$, $2k$ and $2\ell$ respectively; thus $g$
can be expressed through $X_\Delta^2$, $Y_\Delta^2$, $Z_\Delta^2$.
If $g \in \Delta$ has one exponent odd, without loss of generality
$exp_x(g)$ is odd, then $gX_\Delta$ has all exponents even.

If $g \in \Delta$ has one exponent odd, without loss of generality
we assume that $exp_x(g)$ is odd, then $gX_\Delta$ has all exponents
even.

To prove the isomorphism part note that the element $X_\Delta
Y_\Delta Z_\Delta =x^{m-1-2t+2v}y^{1-k+2w-2r}z^{\ell-1+2u-2s}$ has
all three exponents even, thus  \Cref{auxiliary statement in prop 1}
implies $2m \mid m-1-2y+2v$, $2k \mid 1-k+2w-2r$ and $2\ell \mid
\ell-1+2u-2s$. So, by replacing $X_\Delta \mapsto X_\Delta
Y_\Delta^{2i}$ for some integer $i$ and doing similar replacements
for permuted generators, one can achieve that $X_\Delta Y_\Delta
Z_\Delta = 1$ and the property of $X_\Delta, Y_\Delta, Z_\Delta$
given in \Cref{def_X_Delta} holds.

Now note that the elements $X_\Delta, Y_\Delta, Z_\Delta$ yield
defining relations of the group $\pi_{1}(\mathcal{G}_{6})$. Then the
mapping $x \mapsto X_\Delta$, $y \mapsto Y_\Delta$, $z \mapsto
Z_\Delta$ spawns the epimorphism $\psi: \pi_{1}(\mathcal{G}_{6}) \to
\Delta$. We are going to prove that this epimorphism is indeed an
isomorphism.

Each element $g$ of $\Delta$ can be represented in the form
$g=g_iX_\Delta^{2a} Y_\Delta^{2b} Z_\Delta^{2c}$, where $g_i \in
\{1, X_\Delta, Y_\Delta, Z_\Delta\}$; whence such representation is
possible in $\pi_{1}(\mathcal{G}_{6})$.  So it is sufficient to
prove that the above representation  is unique for each $g \in
\Delta$. Assume the contrary, for some element $g$ there are two
different representations $g=g_iX_\Delta^{2a} Y_\Delta^{2b}
Z_\Delta^{2c}=g_i'X_\Delta^{2a'} Y_\Delta^{2b'} Z_\Delta^{2c'}$.

Note that for arbitrary $g \in \Delta$ hold $(x^2)^g=x^{\pm2}$,
$(y^2)^g=y^{\pm2}$ and $(z^2)^g=z^{\pm2}$, and the triple of signs
in the exponents is solely determined by $g_i$: $1 \mapsto (+,+,+)$,
$x \mapsto (+,-,-)$, $y \mapsto (-,+,-)$ and $z \mapsto (-,-,+)$.
Thus $g_iX_\Delta^{2a} Y_\Delta^{2b}
Z_\Delta^{2c}=g_i'X_\Delta^{2a'} Y_\Delta^{2b'} Z_\Delta^{2c'}$
implies $g_i=g_i'$. Then $X_\Delta^{2a} Y_\Delta^{2b}
Z_\Delta^{2c}=X_\Delta^{2a'} Y_\Delta^{2b'} Z_\Delta^{2c'}$, or
$x^{2m(a-a')}y^{2k(b-b')}z^{2\ell(c-c')}=1$, which is a
contradiction with \Cref{propG6-1} (iv).
\end{proof}

\section{Proof of \Cref{th-1-didicosm} and \Cref{th-2-didicosm}}

The isomorphism types of finite index subgroups are already provided
by \Cref{G6_classification}. So we will consider isomorphism types
separately in order to prove respective items of both theorems.

\subsection{Case $\Delta \cong \ZZ^3$}\label{enumeration_Z^3}
Recall that $\Lambda=\langle x^2,y^2,z^2\rangle$. By
\Cref{G6_classification} each subgroup $\Delta$ of index $n$ in
$\pi_{1}(\mathcal{G}_{6})$ with $\Delta \cong \ZZ^3$ has
$\phi(\Delta)=\{1\}$, that is $\Delta \leqslant \Lambda$. Since
$|\pi_{1}(\mathcal{G}_{6}):\Lambda|=4$, get
$|\Lambda:\Delta|=\frac{n}{4}$. Applying \Cref{number of
sublattices} one gets
$$
s_{\mathcal{{G}}_{1},\mathcal{{G}}_{6}}(n)=\omega\left(\frac{n}{4}\right).
$$

Now we proceed to enumeration of the conjugacy classes of subgroups.
Since $\Lambda$ is abelian, the group $\pi_{1}(\mathcal{G}_{6})$
acts by conjugation on subgroups of $\Lambda$ as
$\pi_{1}(\mathcal{G}_{6})/\Lambda\cong\ZZ_2^2$. Thus each conjugacy
class consists of one, two or four subgroups.

\begin{definition}\label{defG6-M}
By $\mathcal{M}_1$ denote the family of all normal subgroups
$\Delta$, by $\mathcal{M}_2$ and $\mathcal{M}_4$ denote the families
of subgroups $\Delta$, which belong to conjugacy classes, containing
two and four subgroups respectively. Also, by $\mathcal{M}_x$ denote
the family of subgroups $\Delta$ such that $\Delta^x=\Delta$.
$\mathcal{M}_y$ and $\mathcal{M}_z$ are defined similar way.
\end{definition}

Each $\Delta \in \mathcal{M}_1$ belongs to all three of
$\mathcal{M}_x$, $\mathcal{M}_y$, $\mathcal{M}_z$; while each
$\Delta \in \mathcal{M}_2$ belongs to exactly one of
$\mathcal{M}_x$, $\mathcal{M}_y$, $\mathcal{M}_z$. Needless to say
that $\Delta \in \mathcal{M}_4$ does not belongs to any of
$\mathcal{M}_x$, $\mathcal{M}_y$, $\mathcal{M}_z$. So
$3|\mathcal{M}_1|+|\mathcal{M}_2|=|\mathcal{M}_x|+|\mathcal{M}_y|+|\mathcal{M}_z|$.
Thus
\begin{equation}\label{eqG6-1}\aligned &
c_{\mathcal{{G}}_{1},\mathcal{{G}}_{6}}(n)=|\mathcal{M}_1|+\frac{|\mathcal{M}_2|}{2}+\frac{|\mathcal{M}_4|}{4}=
\frac{|\mathcal{M}_1|+|\mathcal{M}_2|+|\mathcal{M}_4|}{4}+\frac{3|\mathcal{M}_1|+|\mathcal{M}_2|}{4}
\\&
=\frac{s_{\mathcal{G}_{1},\mathcal{G}_{6}}(n)}{4}+\frac{|\mathcal{M}_x|+|\mathcal{M}_y|+|\mathcal{M}_z|}{4}.
\endaligned\end{equation}
Since in a suitable basis each of $Ad_x$, $Ad_y$, $Ad_z$ takes the
form $(a,b,c)\mapsto (-a,b,c)$, \Cref{number of mirror-preserved}
claims
$|\mathcal{M}_x|=|\mathcal{M}_y|=|\mathcal{M}_z|=\sigma_2\left(\frac{n}{4}\right)+3\sigma_2\left(\frac{n}{8}\right)$.
Thus
$$
c_{G_1,G_6}(n)=\frac{1}{4}\omega\left({\frac{n}{4}}\right)+\frac{3}{4}\sigma_2\left(\frac{n}{4}\right)+\frac{9}{4}\sigma_2\left(\frac{n}{8}\right).
$$
By definition, $|\mathcal{M}_1|$ is the number of normal subgroups
of index $n$ in $\pi_{1}(\mathcal{G}_{6})$ isomorphic to $\ZZ^3$. So
it is interesting in itself. It is explicitly calculated in
\cref{accessory remarks}.

\subsection{Case $\Delta \cong
\pi_{1}(\mathcal{G}_{2})$}\label{sectG2_in_G6} By
\Cref{G6_classification} each subgroup $\Delta$ of index $n$ in
$\pi_{1}(\mathcal{G}_{6})$ with $\Delta \cong
\pi_{1}(\mathcal{G}_{2})$ has $|\phi(\Delta)|=2$. In other words,
holds one of the inclusions $\Delta \leqslant \langle x, y^2,
z^2\rangle = \Gamma_x$, $\Delta \leqslant \langle y, x^2,
z^2\rangle=\Gamma_y$, $\Delta \leqslant \langle z, x^2,
y^2\rangle=\Gamma_z$. Since the above groups have index 2 in
$\pi_{1}(\mathcal{G}_{6})$, subgroup $\Delta$ has index
$\frac{n}{2}$ in the respective subgroup. Further, $\Delta$ belongs
to just one of $\Gamma_x, \Gamma_y, \Gamma_z$ hence the intersection
of each two of them is the abelian group $\Lambda=\langle
x^2,y^2,x^2 \rangle$.

%

Also, subgroups $\Gamma_x, \Gamma_y, \Gamma_z$ are permutable by
some outer automorphism of $\pi_{1}(\mathcal{G}_{6})$, which
permutes $x,y,z$. Thus it is sufficient to enumerate the subgroups
of $\Gamma=\Gamma_x$. Further during this subsection $\Delta$
denotes a subgroup of index $\frac{n}{2}$ in $\Gamma$ isomorphic to
$\pi_{1}(\mathcal{G}_{2})$. Pay attention, in spite of
$\Delta\cong\Gamma $, $\Delta$ is a non-trivial subgroup in
$\Gamma$.

Since $\Gamma \cong \pi_{1}(\mathcal{G}_{2})$, the number of the
above subgroups $\Delta$ in $\Gamma$ is provided by \Cref{number of
G2_in_G2}, thus
$$
s_{\mathcal{G}_2,\mathcal{G}_6}(n)=3s_{\mathcal{G}_2,\mathcal{G}_2}\left(\frac{n}{2}\right)=3\omega\left(\frac{n}{2}\right)-3\omega\left(\frac{n}{4}\right).
$$

To enumerate conjugacy classes we need one more definition.

\begin{definition}
Consider a subgroup $\Delta$.  The set of subgroups
$\{\Delta^\gamma| \gamma\in\Gamma\}$ we call a {\em partial
conjugacy class} $\Delta^\Gamma$.
\end{definition}

An enumeration of partial conjugacy classes of subgroups $\Delta$ is
given by \Cref{number of conjugacy_classes_G2_in_G2}. To enumerate
conjugacy classes note that $\pi_{1}(\mathcal{G}_{6})=\Gamma\cup
y\Gamma$, so $\Gamma$ has index 2 in $\pi_{1}(\mathcal{G}_{6})$.
Consequently $\Gamma$ is normal in $\pi_{1}(\mathcal{G}_{6})$. Thus
for each $\Delta$ its conjugacy class consists of one or two partial
conjugacy classes $\Delta^\Gamma$ and $(\Delta^\Gamma)^y$ depending
upon whether partial conjugacy classes $\Delta^\Gamma$ and
$(\Delta^\Gamma)^y$ coincide or not.

{\bf Notation.} By $\mathcal{K}_1$ denote the set of partial
conjugacy classes $\Delta^\Gamma$, such that the equality
$\Delta^\Gamma=(\Delta^\Gamma)^y$ holds. By $\mathcal{K}_2$ denote
the set of partial conjugacy classes $\Delta^\Gamma$ with
$\Delta^\Gamma\neq(\Delta^\Gamma)^y$.

In the introduced notation
\begin{equation}\label{locally_needed_equation} c_{\mathcal{G}_2,\mathcal{G}_6}(n)=3\left(|\mathcal{K}_1|+\frac{|\mathcal{K}_2|}{2}\right)=3\left(\frac{|\mathcal{K}_1|+|\mathcal{K}_2|}{2}+\frac{|\mathcal{K}_1|}{2}\right).
\end{equation} \Cref{number of conjugacy_classes_G2_in_G2} implies
$|\mathcal{K}_1|+|\mathcal{K}_2|=\sigma_2\left(\frac{n}{2}\right)+2\sigma_2\left(\frac{n}{4}\right)-3\sigma_2\left(\frac{n}{8}\right)$.
All that's left is to calculate $|\mathcal{K}_1|$, this is done in
\Cref{invariant_partial_G2_classes}. First we need the following
auxiliary statement.

\begin{lemma}\label{inclusion_exclusion_for_d_3} The following identity holds
$$
d_3(n)-3d_3\left(\frac{n}{2}\right)+3d_3\left(\frac{n}{4}\right)-d_3\left(\frac{n}{8}\right)=\begin{cases}
d_3(n) &\text{if $n$ is odd}\\
0 &\text{if $n$ is even}
\end{cases}.
$$
\end{lemma}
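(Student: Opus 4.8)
The plan is to strip off the power of $2$ from $n$ using multiplicativity of $d_3$, and then to recognise the alternating combination on the left-hand side as the result of multiplying the power series $\sum_i d_3(2^i)t^i$ by $(1-t)^3$.

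First I would write $n=2^k m$ with $m$ odd and $k\ge 0$. Recall that $d_3$ is multiplicative, being the number of ways to write its argument as an ordered product of three positive integers, and recall the convention that $d_3$ of a non-integer equals $0$. Hence $d_3\!\left(n/2^j\right)=d_3(2^{k-j})\,d_3(m)$ for $0\le j\le k$, and $d_3\!\left(n/2^j\right)=0$ for $j>k$. Writing $f(i)=d_3(2^i)$ for $i\ge0$ and $f(i)=0$ for $i<0$, the left-hand side of the claimed identity becomes
\[
d_3(m)\bigl(f(k)-3f(k-1)+3f(k-2)-f(k-3)\bigr).
\]
When $n$ is odd we have $k=0$, and this reduces to $d_3(m)\,f(0)=d_3(m)=d_3(n)$, which is the first case. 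So the remaining task is to show that $f(k)-3f(k-1)+3f(k-2)-f(k-3)=0$ whenever $k\ge1$ (equivalently, whenever $n$ is even).

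Next I would use the explicit formula $f(i)=\binom{i+2}{2}$ for $i\ge0$, which counts the ordered triples $(i_1,i_2,i_3)$ of non-negative integers with $i_1+i_2+i_3=i$; equivalently, $\sum_{i\ge0}f(i)t^i=(1-t)^{-3}$. Multiplying this identity by $(1-t)^3=1-3t+3t^2-t^3$ and using $f(i)=0$ for $i<0$, the coefficient of $t^k$ on the left is precisely $f(k)-3f(k-1)+3f(k-2)-f(k-3)$, while on the right it is the coefficient of $t^k$ in the constant series $1$. Thus this combination equals $1$ for $k=0$ and $0$ for every $k\ge1$, which finishes the proof. If one prefers to avoid generating functions: since $f$ coincides on $i\ge0$ with a quadratic polynomial in $i$, its third finite difference vanishes for $k\ge3$, and the boundary cases $k=1$, giving $f(1)-3f(0)=3-3=0$, and $k=2$, giving $f(2)-3f(1)+3f(0)=6-9+3=0$, are immediate.

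There is essentially no obstacle here; the only point that needs a moment's attention is the bookkeeping of the ``vanishing for non-integer arguments'' convention, which is exactly what makes the low cases $k=1,2$ come out correctly — and this is absorbed automatically by the power-series formulation, where the missing terms simply correspond to negative powers of $t$.
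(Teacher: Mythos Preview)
Your proof is correct. It takes a genuinely different route from the paper's argument, so a short comparison is in order.

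The paper argues directly by inclusion--exclusion on the parities of the three factors: for even $n$, the quantity $d_3(n/2)$ counts factorisations $abc=n$ with (say) $a$ even, $d_3(n/4)$ counts those with two specified factors even, and $d_3(n/8)$ counts those with all three even; the alternating sum therefore counts factorisations with all three factors odd, which is impossible when $n$ is even. Your approach instead factors out the $2$-part of $n$ via multiplicativity of $d_3$ and then recognises the remaining alternating combination as the $t^k$-coefficient in $(1-t)^3\cdot(1-t)^{-3}=1$, or equivalently as a third finite difference of a quadratic. The paper's proof is slightly more elementary and self-contained; your argument is more algebraic and generalises mechanically (replace $2$ by any prime $p$, or $d_3$ by $d_r$ with the binomial $(1-t)^r$), which is a nice feature.
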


\begin{proof}
Consider all factorizations $n=abc$ and use inclusion-exclusion
formula for all triples of parities of $a,b,c$.
\end{proof}

\begin{proof}
Recall that by definition $d_3(n)$ is the number of ordered positive
integer factorizations $abc=n$. Then in case of an odd $n$ equality
$d_3(n)-3d_3\left(\frac{n}{2}\right)+3d_3\left(\frac{n}{4}\right)-d_3\left(\frac{n}{8}\right)=d_3(n)$
holds because terms $d_3\left(\frac{n}{2}\right),
d_3\left(\frac{n}{4}\right), d_3\left(\frac{n}{8}\right)$ vanish.

Assume $n$ is even. Note that positive integer factorizations
$abc=n$ with even $a$ are enumerated by
$d_3\left(\frac{n}{2}\right)$. Indeed, they bijectively correspond
to factorizations $\frac{a}{2}bc=\frac{n}{2}$. Same holds for
factorizations $abc=n$ with even $b$, and factorizations $abc=n$
with even $c$. Similarly, factorizations $abc=n$ with even $a$ and
$b$ simultaneously are enumerated by $d_3\left(\frac{n}{4}\right)$.
The same holds for permuted $a,b,c$. Finally, factorizations $abc=n$
with even $a,b,c$ are enumerated by $d_3\left(\frac{n}{8}\right)$.
Applying inclusion-exclusion formula we get that
$d_3(n)-3d_3\left(\frac{n}{2}\right)+3d_3\left(\frac{n}{4}\right)-d_3\left(\frac{n}{8}\right)$
enumerates factorizations $abc=n$, where all three $a,b,c$ are odd.
Since $n$ is even such factorization is impossible, the above
expression vanishes.
\end{proof}

Next lemma finally calculates $|\mathcal{K}_1|$.
\begin{lemma}\label{invariant_partial_G2_classes}
$$
|\mathcal{K}_1|=d_3(n/2)-d_3(n/4)-3d_3(n/8)+5d_3(n/16)-2d_3(n/32).
$$
\end{lemma}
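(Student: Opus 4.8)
The plan is to describe $\mathcal{K}_1$ — the set of partial conjugacy classes $\Delta^\Gamma$ fixed by the outer action of $y$ — in terms of the explicit combinatorial parameters that index partial conjugacy classes in $\Gamma \cong \pi_1(\mathcal{G}_2)$, and then to count. By \Cref{number of conjugacy_classes_G2_in_G2}, a partial conjugacy class $\Delta^\Gamma$ (of index $n/2$ in $\Gamma$) corresponds to a triple $(k, H, \bar h)$, where $k$ is an odd positive divisor of $n/2$, $H$ is a sublattice of index $\frac{n/2}{k} = \frac{n}{2k}$ in $\ZZ^2$ (the copy of $\ZZ^2$ being $\langle y^2, z^2\rangle$ inside $\Gamma = \langle x, y^2, z^2\rangle$), and $\bar h$ is a coset in $\ZZ^2/\langle H,(2,0),(0,2)\rangle$. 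First I would work out how conjugation by $y$ acts on such a triple. Conjugation by $y$ is an automorphism of $\pi_1(\mathcal{G}_6)$ normalizing $\Gamma$; using the multiplication rules \eqref{multlawG6-1} and Table~1 one sees that $\mathrm{Ad}_y$ fixes $x$ up to the abelian part, negates the $z$-direction and fixes the $y$-direction of $\Lambda$, i.e.\ on $\langle y^2,z^2\rangle$ it acts as the reflection $\ell(u,v) = (u,-v)$, while it may also translate the "$z$-coset" part. So the condition $\Delta^\Gamma = (\Delta^\Gamma)^y$ translates into: $k$ is unchanged (automatic), $H$ must be $\ell$-invariant, and $\bar h$ must be fixed by the induced involution on $\ZZ^2/\langle H,(2,0),(0,2)\rangle$ (possibly twisted by a fixed translation vector coming from $\mathrm{Ad}_y$ acting on $x$).

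The key steps, in order: (1) pin down $\mathrm{Ad}_y$ on $\Gamma$ modulo $\Lambda$ and on $\Lambda$ itself, isolating exactly the reflection $\ell$ plus a translational twist; (2) for each odd $k \mid n/2$, count the $\ell$-invariant sublattices $H$ of index $m := \frac{n}{2k}$ in $\ZZ^2$ — by \Cref{number of mirror-preserved_in_Z^2} this is $\sigma_0(m) + \sigma_0(m/2)$, but I actually need the finer data of which matrix form ($\bigl(\begin{smallmatrix} b & 0 \\ 0 & a\end{smallmatrix}\bigr)$ or $\bigl(\begin{smallmatrix} b & a/2 \\ 0 & a\end{smallmatrix}\bigr)$) occurs, since the fixed-point count of the involution on the small quotient group $\ZZ^2/\langle H,(2,0),(0,2)\rangle$ depends on it; (3) for each such $H$, compute the number of $\bar h$ fixed by the (twisted) involution on $Q_H := \ZZ^2/\langle H,(2,0),(0,2)\rangle$, which is a $2$-group of order $1,2$ or $4$ depending on the parities of $a,b$, and where the number of fixed points of an involution on an elementary-abelian-type $2$-group of this size is governed by how many coordinates it reflects; (4) sum over $(k,H)$ and simplify. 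The parity bookkeeping will naturally produce a sum that is handled by the inclusion–exclusion identity of \Cref{inclusion_exclusion_for_d_3}: splitting the count according to which of $a, b$ (and, after accounting for $k$, which of $a,b,c$ in a factorization of $n$) are even yields telescoping combinations of $d_3$ at $n/2, n/4, \dots, n/32$, which collapse to the stated right-hand side $d_3(n/2) - d_3(n/4) - 3d_3(n/8) + 5d_3(n/16) - 2d_3(n/32)$.

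I expect the main obstacle to be step (3): correctly determining the number of fixed cosets $\bar h$ of the involution on $Q_H$, because this involution is not simply $\ell$ reduced mod the subgroup — it is $\ell$ composed with translation by a vector $t$ coming from $\mathrm{Ad}_y(x) = x \cdot (\text{element of }\Lambda)$, and whether $\bar t$ lies in the image of $(\mathrm{id} - \ell)$ on $Q_H$ decides whether the affine involution has $|Q_H^{\ell}|$ fixed points or none. Keeping track of this twist across the two matrix shapes of $H$ and the various parities of $a, b$ is the delicate part; everything downstream is a finite, if tedious, summation. A useful sanity check at the end is that $|\mathcal{K}_1| \le |\mathcal{K}_1| + |\mathcal{K}_2| = \sigma_2(n/2) + 2\sigma_2(n/4) - 3\sigma_2(n/8)$ and that plugging the result into \eqref{locally_needed_equation} reproduces the formula for $c_{\mathcal{G}_2,\mathcal{G}_6}(n)$ asserted in \Cref{th-2-didicosm}(ii).
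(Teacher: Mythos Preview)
Your proposal is correct and follows essentially the same route as the paper's proof: translate the condition $(\Delta^\Gamma)^y=\Delta^\Gamma$ into invariance of the triple $(k,H,\bar h)$, use \Cref{number of mirror-preserved_in_Z^2} to list the $\ell$-invariant $H$'s in two matrix shapes, determine the twist on $\bar h$ (the paper computes $(Z_\Delta)^y$ explicitly and finds the condition reduces to $(1,1)\in\langle H,(2,0),(0,2)\rangle$, which is your ``$\bar t\in\mathrm{im}(\mathrm{id}-\ell)$'' criterion specialized to the fact that $\ell$ acts trivially on $Q_H$), and then sum using \Cref{inclusion_exclusion_for_d_3}. The only refinement worth noting is that since $\ell\equiv\mathrm{id}$ on $Q_H$, the affine involution is a pure translation, so for each admissible $H$ you get either $0$ or all $|Q_H|$ cosets $\bar h$ --- this is exactly what the paper finds case by case.
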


{\bf Remark.} The above formula for $|\mathcal{K}_1|$ looks
horribly; actually it means the following. Let $n=2^qr$ where
$2\nmid s$. Then
\begin{itemize}
\item if $q=0$ then $|\mathcal{K}_1|=0$;
\item if $q=1$ then $|\mathcal{K}_1|=d_3(r)$;
\item if $q=2$ then $|\mathcal{K}_1|=2d_3(r)$;
\item if $q>2$ then $|\mathcal{K}_1|=0$.
\end{itemize}

\begin{proof}
Let $\Delta$ be a subgroup of even index $n$ in
$\pi_{1}(\mathcal{G}_{6})$, such that $\Delta \cong
\pi_{1}(\mathcal{G}_{2})$ and $(\Delta^\Lambda)^y=\Delta^\Lambda$.

 Let
$Z_\Delta=x^ky^{2s}z^{2t}$, where $k$ is an odd positive, and
 $k \mid \frac{n}{2}$. We set
$H_\Delta=\Delta\cap\langle y^2,z^2\rangle$. Further we identify
$\langle y^2,z^2\rangle$ with $\ZZ^2$, that is we address an element
$y^{2a}z^{2b}$ as $(a,b)$.

 \Cref{number of
conjugacy_classes_G2_in_G2} implies that the condition
$(\Delta^\Lambda)^y=\Delta^\Lambda$ means that the groups $\Delta$
and $\Delta^y$ have the same triples $(k,H,\bar{h})$ of invariants.
Consider two conditions:  (i) the groups $\Delta$ and $\Delta^y$
share the same invariant $H$, (ii)  the groups $\Delta$ and
$\Delta^y$ share the same invariant $\bar{h}$.

Condition (i) means that $\Delta^y\cap \langle y^2,z^2\rangle
=\Delta\cap \langle y^2,z^2\rangle$, i.e. $Ad_y(H)=H$. Since the
action of $Ad_y$ on $\langle y^2,z^2\rangle$ is given by
$(u,v)\mapsto (u,-v)$, \Cref{number of mirror-preserved_in_Z^2}
claims that either $H=\langle (a,0),(0,b)\rangle$ or $H=\langle
(a,0),(a/2,b)\rangle$, where $a,b>0$ and $ab=\frac{n}{2k}$;
additionally $a$ is even in the second case. We say that a subgroups
$H$ is {\em of the first type}  if $H=\langle (a,0),(0,b)\rangle$,
likewise $H$ is {\em of the second type}  if $H=\langle
(a,0),(a/2,b)\rangle$.

Consider condition (ii). Note that
$(Z_\Delta)^y=(x^ky^{2s}z^{2t})^y=x^{-k}y^{2s-2}z^{-2t-2}$. Thus the
element $(Z_\Delta^y)^{-1} \in \Delta^y$ satisfies the definition of
the element $Z_{\Delta^y}$. So the corresponding value of $\bar{h}$
is $(s-1,-t-1)$. Then the condition (ii) is reformulated as
$(s,t)\in (s-1,-t-1)+\langle H,(2,0),(0,2)\rangle$, or equivalently
$(1,1) \in \langle H,(2,0),(0,2)\rangle$.

In case  $H=\langle (a,0),(0,b)\rangle$ this implies $a$ and $b$ are
odd, thus $\frac{n}{2}=kab$ is odd ($k$ is odd due to \Cref{number
of G2_in_G2}). Vice versa, in case $\frac{n}{2}$ is odd, an
arbitrary positive factorization $\frac{n}{2}=kab$ spawns the unique
group $H=\langle (a,0),(0,b)\rangle$, that defines the unique coset
$\ZZ^2/\langle H,(2,0),(0,2)\rangle$, so we get the unique partial
conjugacy class $\Delta^\Lambda$ with
$\Delta^\Lambda=(\Delta^\Lambda)^y$ corresponding to each
factorization $\frac{n}{2}=kab$. Thus there are $d_3(\frac{n}{2})$
conjugacy classes of the first type if $\frac{n}{2}$ is odd. Then by
\Cref{inclusion_exclusion_for_d_3} the first type provides
$d_3(\frac{n}{2})-3d_3(\frac{n}{4})+3d_3(\frac{n}{8})-d_3(\frac{n}{16})$
partial conjugacy classes in $\mathcal{K}_1$.

In case  $H=\langle (a,0),(a/2,b)\rangle$ condition $(1,1) \in
\langle H,(2,0),(0,2)\rangle$ implies $b$ and $\frac{a}{2}$ are odd.
Then $|\ZZ^2/\langle (a,0),(a/2,b),(2,0),(0,2)\rangle|=2$. Vice
versa, if $\frac{n}{2}$ is even but not divisible by 4, then each
factorization $\frac{n}{2}=kab$, where $k,b$ are odd provides the
unique subgroup $H$ of the second type. In turn, each subgroup $H$
provides two cosets $\bar{h}$ because $|\ZZ^2/\langle
H,(2,0),(0,2)\rangle|=2$. Thus if $\frac{n}{4}$ is odd there are
$2d_3(\frac{n}{4})$ partial conjugacy classes of the second type in
$\mathcal{K}_1$, again by \Cref{inclusion_exclusion_for_d_3} this
amount is equal to
$2d_3\left(\frac{n}{4}\right)-6d_3\left(\frac{n}{8}\right)+6d_3\left(\frac{n}{16}\right)-2d_3\left(\frac{n}{32}\right)$.

Summing up one gets
$|\mathcal{K}_1|=d_3(n/2)-d_3(n/4)-3d_3(n/8)+5d_3(n/16)-2d_3(n/32)$.
\end{proof}

Substituting the result of \Cref{invariant_partial_G2_classes} into
equation (\ref{locally_needed_equation}) we get
\begin{align*}
c_{\mathcal{G}_2,\mathcal{G}_6}(n)&=3\left(\frac{|\mathcal{K}_1|+|\mathcal{K}_2|}{2}+\frac{|\mathcal{K}_1|}{2}\right)\\&=\frac{3}{2}\left(\sigma_2\!\!\left(\frac{n}{2}\right)+2\sigma_2\!\!\left(\frac{n}{4}\right)-3\sigma_2\!\!\left(\frac{n}{8}\right)+d_3\!\!\left(\frac{n}{2}\right)-d_3\!\!\left(\frac{n}{4}\right)-3d_3\!\!\left(\frac{n}{8}\right)+5d_3\!\!\left(\frac{n}{16}\right)-2d_3\!\!\left(\frac{n}{32}\right)\right).
\end{align*}

\subsection{Case $\Delta \cong \pi_{1}(\mathcal{G}_{6})$}
We claim that the following two propositions holds.

{\bf Notation.} Given integers $m,n$ with $n > 0$, by $[m]_n$ denote
the integer number, defined by $0 \le [m]_n < n$ and $m\equiv[m]_n
\mod n$.

\begin{proposition}\label{enumeration_of_G6_in_G6}
The subgroups $\Delta$ of index $n$ in $\pi_1(\mathcal{G}_6)$
isomorphic to $\pi_1(\mathcal{G}_6)$ are in one-to-one
correspondence with the $6$-plets $(k,\ell,m,u,v,w)$, $0 \le v < m$,
$0 \le u < \ell$, $0\le w < k$ where $k,\ell,m $ are odd positives
and $k\ell m=n$. Moreover, a subgroup $\Delta$ is generated by
elements $X_\Delta=x^my^{[1-k+2w]_{2k}}z^{[\ell-1+2u]_{2\ell}}$,
$Y_\Delta=y^kx^{[m-1+2v]_{2m}}z^{2u}$ and $Z_\Delta=z^\ell
x^{2v}y^{2w}$.
\end{proposition}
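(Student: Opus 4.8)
This proposition sharpens \Cref{G6_classification}, which already tells us that the subgroups $\Delta$ with $\Delta\cong\pi_1(\mathcal{G}_6)$ are precisely those with $|\phi(\Delta)|=4$, and that each such $\Delta$ is generated by a triple $X_\Delta=x^{m}y^{2r}z^{2s}$, $Y_\Delta=y^{k}x^{2t}z^{2u}$, $Z_\Delta=z^{\ell}x^{2v}y^{2w}$ with $m,k,\ell$ odd positive, which we may moreover take to satisfy $X_\Delta Y_\Delta Z_\Delta=1$. The plan is to record the invariants of $\Delta$ and the index, to show that the only residual freedom is a triple $(u,v,w)$ in the indicated ranges, to check the converse, and to combine these into a bijection. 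Here $m=exp_x(X_\Delta)$, $k=exp_y(Y_\Delta)$, $\ell=exp_z(Z_\Delta)$ are the least positive odd values of $exp_x$, $exp_y$, $exp_z$ on $\Delta$ (by \Cref{def_X_Delta} and \Cref{auxiliary statement in prop 1}), hence depend only on $\Delta$, and $\Delta\cap\Lambda=\langle X_\Delta^{2},Y_\Delta^{2},Z_\Delta^{2}\rangle=\langle x^{2m},y^{2k},z^{2\ell}\rangle$ (again by \Cref{auxiliary statement in prop 1}). Since $\Delta/(\Delta\cap\Lambda)\cong\phi(\Delta)=\pi_1(\mathcal{G}_6)/\Lambda$ has order $4$, we get $n=[\pi_1(\mathcal{G}_6):\Delta]=[\Lambda:\Delta\cap\Lambda]=k\ell m$ with $k,\ell,m$ odd (in particular $n$ is odd, as in the Remark after \Cref{th-2-didicosm}), and the diagonal lattice $\langle x^{2m},y^{2k},z^{2\ell}\rangle$ recovers $(m,k,\ell)$ uniquely.

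Now fix the factorization $n=k\ell m$. An element of $\Delta$ of the shape $z^{\ell}x^{2\ast}y^{2\ast}$ exists by \Cref{G6_classification}, and any two such elements differ, by additivity of $exp_z$ on elements with even $x$- and $y$-exponents, by an element of $\Delta\cap\Lambda$ with zero $z$-exponent, that is of $\langle x^{2m},y^{2k}\rangle$; left multiplication by $x^{2m}$ (resp. $y^{2k}$) shifts, via \eqref{multlawG6-1}, the halved $x$-exponent by $m$ (resp. the halved $y$-exponent by $k$) and nothing else. Hence there is a unique representative $Z_\Delta=z^{\ell}x^{2v}y^{2w}$ with $0\le v<m$, $0\le w<k$. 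In the same way an element $Y_\Delta=y^{k}x^{2t}z^{2u}$ exists and is determined modulo $\langle x^{2m},z^{2\ell}\rangle$, so after reducing we may assume $0\le u<\ell$; its halved $x$-exponent $t$ is then a priori only determined modulo $m$.

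The value of that exponent is forced by the relation $X_\Delta Y_\Delta Z_\Delta=1$, i.e. $X_\Delta=(Y_\Delta Z_\Delta)^{-1}$. Reducing $Y_\Delta Z_\Delta$ to canonical form by means of Table~1 --- where the product $yz$ contributes $x^{-1}$, reflecting $xyz=1$ --- and \eqref{multlawG6-1}, one obtains
$$
Y_\Delta Z_\Delta=x^{\,2(v-t)-1}\,y^{\,1-k+2w}\,z^{\,\ell-1+2u},
$$
so that $exp_x\big((Y_\Delta Z_\Delta)^{-1}\big)=2(t-v)+1$; since this must equal $exp_x(X_\Delta)=m$, we get $2t\equiv m-1+2v\pmod{2m}$, i.e. $exp_x(Y_\Delta)=[m-1+2v]_{2m}$. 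Substituting $2t=m-1+2v$ turns the display into $Y_\Delta Z_\Delta=x^{-m}y^{1-k+2w}z^{\ell-1+2u}$, and inverting it (again by \eqref{multlawG6-1}) gives $X_\Delta=x^{m}y^{1-k+2w}z^{\ell-1+2u}$; reducing its $y$- and $z$-exponents modulo $2k$ and $2\ell$ (which leaves $\Delta$ unchanged, as $y^{2k},z^{2\ell}\in\Delta\cap\Lambda$) yields exactly $X_\Delta=x^{m}y^{[1-k+2w]_{2k}}z^{[\ell-1+2u]_{2\ell}}$. Thus $\Delta$ determines the $6$-plet $(k,\ell,m,u,v,w)$, the three normalized generators are uniquely determined by $\Delta$ and have the asserted shape, and they generate $\Delta$.

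For the converse, given odd positive $k,\ell,m$ with $k\ell m=n$ and $(u,v,w)$ in the stated ranges, put $Z=z^{\ell}x^{2v}y^{2w}$, $Y=y^{k}x^{m-1+2v}z^{2u}$, $X=x^{m}y^{1-k+2w}z^{\ell-1+2u}$ (unreduced exponents) and $\Delta=\langle X,Y,Z\rangle$. The computation above gives $XYZ=1$; moreover conjugation by $X$ (resp. $Y$) acts on $\Lambda$ exactly as $Ad_x$ (resp. $Ad_y$), negating the corresponding coordinate, whence $XY^{2}X^{-1}Y^{2}=YX^{2}Y^{-1}X^{2}=1$. So $X,Y,Z$ satisfy the defining relations of $\pi_1(\mathcal{G}_6)$ and yield an epimorphism $\pi_1(\mathcal{G}_6)\to\Delta$; its injectivity (hence $\Delta\cong\pi_1(\mathcal{G}_6)$) is the uniqueness-of-canonical-form argument from the end of the proof of \Cref{G6_classification}, using that $X^{2}=x^{2m}$, $Y^{2}=y^{2k}$, $Z^{2}=z^{2\ell}$ are $\ZZ$-independent, and $[\pi_1(\mathcal{G}_6):\Delta]=[\Lambda:\langle x^{2m},y^{2k},z^{2\ell}\rangle]=k\ell m=n$. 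Replacing $X$ and $Y$ by their reductions (which lie in $\Delta$, since the relevant $y^{2k},z^{2\ell},x^{2m}$ are among $X^{2},Y^{2},Z^{2}$) leaves $\Delta$ unchanged and produces the generators written in the statement, and reading off the invariants of $\Delta$ as in the first two paragraphs returns the original $(k,\ell,m,u,v,w)$; hence distinct $6$-plets yield distinct subgroups and the correspondence is a bijection. The main obstacle is the displayed reduction of $Y_\Delta Z_\Delta$ and of its inverse to canonical form: one must push the product through Table~1 and \eqref{multlawG6-1} while controlling every parity, and in particular produce the shifts $m-1$, $1-k$, $\ell-1$ with the correct signs. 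A secondary delicate point is the claim that the ambiguity subgroup of each generator is exactly the relevant rank-$2$ coordinate sublattice of $\Delta\cap\Lambda$ and no larger, which rests on the exact identification $\Delta\cap\Lambda=\langle x^{2m},y^{2k},z^{2\ell}\rangle$.
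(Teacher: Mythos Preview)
Your proof is correct and follows essentially the same route as the paper's: both directions rest on \Cref{G6_classification}, with the forward map extracting the invariants $(k,\ell,m)$ and the residual data $(u,v,w)$ from a normalized generating triple, and the converse checked by direct verification. Where the paper simply points back to the proof of \Cref{G6_classification} for the forward direction and asserts ``direct verification shows'' that the displayed union of four $\langle x^{2m},y^{2k},z^{2\ell}\rangle$-cosets is a subgroup, you spell out the normalization of $Z_\Delta$ and $Y_\Delta$, compute $Y_\Delta Z_\Delta$ explicitly to pin down $exp_x(Y_\Delta)$ and hence $X_\Delta$, and run the index and isomorphism checks in full; this is a faithful expansion of what the paper leaves implicit. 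One expository nit: your phrase ``since this must equal $exp_x(X_\Delta)=m$'' reads as if $X_\Delta$ has already been chosen --- it is cleaner to say that $(Y_\Delta Z_\Delta)^{-1}\in\Delta$ has odd $x$-exponent and even $y,z$-exponents, so by \Cref{auxiliary statement in prop 1} its $x$-exponent is $\equiv m\pmod{2m}$, which forces $2t\equiv m-1+2v\pmod{2m}$ without reference to $X_\Delta$.
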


\begin{proof}
The correspondence from the set of subgroups $\Delta$ onto the set
of $6$-plets of the above form is built in the proof of
\Cref{G6_classification}.

To build the back correspondence consider the subgroup generated by
$X_\Delta, Y_\Delta, Z_\Delta$. Consider the following set
$$\langle x^{2m}, y^{2k}, z^{2\ell}\rangle \cup
X_\Delta\langle x^{2m}, y^{2k}, z^{2\ell}\rangle \cup
Y_\Delta\langle x^{2m}, y^{2k}, z^{2\ell}\rangle \cup
Z_\Delta\langle x^{2m}, y^{2k}, z^{2\ell}\rangle.$$ Direct
verification shows that it is the subgroup. That is, the group
$\langle X_\Delta, Y_\Delta, Z_\Delta \rangle$ is a subgroup of
index $k\ell m=n$ in $\pi_1(\mathcal{G}_6)$, isomorphic to
$\pi_1(\mathcal{G}_6)$ itself, by virtue of
\Cref{G6_classification}.
\end{proof}

\begin{proposition}\label{enumeration_of_G6_in_G6_conjugacy_classes}
The conjugacy classes of subgroups $\Delta$ of index $n$ in
$\pi_1(\mathcal{G}_6)$ isomorphic to $\pi_1(\mathcal{G}_6)$ are in
one-to-one correspondence with the triples $(k,\ell,m)$, where
$k,\ell,m $ are odd positive integers and $k\ell m=n$.
\end{proposition}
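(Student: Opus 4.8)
The plan is to start from the bijection of \Cref{enumeration_of_G6_in_G6} between subgroups $\Delta\cong\pi_1(\mathcal{G}_6)$ of index $n$ and $6$-plets $(k,\ell,m,u,v,w)$ with $k\ell m=n$ ($k,\ell,m$ odd positives), $0\le v<m$, $0\le u<\ell$, $0\le w<k$, and to understand the action of $\pi_1(\mathcal{G}_6)$ by conjugation on these $6$-plets. The triple $(k,\ell,m)$ records the minimal odd exponents of $Z_\Delta,X_\Delta,Y_\Delta$ at the three coordinates, so it is visibly a conjugacy invariant (conjugation by $x,y,z$ only flips signs of $x^2,y^2,z^2$-exponents, via \Cref{propG6-1}(iii), and hence does not change which odd residues occur). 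Thus the map $\Delta\mapsto(k,\ell,m)$ descends to a well-defined map from conjugacy classes to the set of triples of odd positives with product $n$, and it is clearly surjective since for any such triple the $6$-plet $(k,\ell,m,0,0,0)$ gives a subgroup with that invariant. It remains to prove injectivity: any two subgroups with the same triple $(k,\ell,m)$ are conjugate in $\pi_1(\mathcal{G}_6)$.

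For injectivity I would fix $(k,\ell,m)$ and compute how conjugation by the generators $x,y,z$ (equivalently, by $x^2,y^2,z^2$ and the finite part $g_i$) moves the parameters $(u,v,w)$ modulo $2\ell,2m,2k$ respectively — or rather modulo $\ell,m,k$, once one accounts for how $X_\Delta,Y_\Delta,Z_\Delta$ may be multiplied by squares of the other generators to restore the normal form of \Cref{def_X_Delta}. Concretely: conjugating $Z_\Delta=z^\ell x^{2v}y^{2w}$ by $x$ gives $z^\ell x^{2v}y^{2w}$ with $w$ replaced by $-w$ (and a shift from moving $x$ past $z^\ell$), conjugating by $z$ shifts $v,w$ by bounded amounts, etc.; carrying this out using Table~1 and \eqref{multlawG6-1} shows the $\pi_1(\mathcal{G}_6)/\Lambda\cong\ZZ_2^2$ action together with conjugation by $\Lambda=\langle x^2,y^2,z^2\rangle$ generates enough translations to move $(u,v,w)$ to $(0,0,0)$ modulo the ranges $[0,\ell)\times[0,m)\times[0,k)$. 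Since $\Lambda$ conjugation translates $v$ by multiples of... — one checks that conjugating $Z_\Delta$ by $x^2$ changes $v$ by $\pm 2$-type shifts while conjugating by $z^2$-related elements and re-normalizing changes it by steps that, because $m$ is odd, generate all of $\ZZ/m$; similarly for $u\bmod\ell$ and $w\bmod k$. Hence every $6$-plet with a given $(k,\ell,m)$ is conjugate to $(k,\ell,m,0,0,0)$, proving there is exactly one conjugacy class per triple.

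The main obstacle I expect is the bookkeeping in the previous paragraph: one must track simultaneously the six parameters, the mod-$2k$, mod-$2m$, mod-$2\ell$ constraints from \Cref{enumeration_of_G6_in_G6}, and the re-normalization moves $X_\Delta\mapsto X_\Delta Y_\Delta^{2i}$ (etc.) that were used in the proof of \Cref{G6_classification} to force $X_\Delta Y_\Delta Z_\Delta=1$; a conjugation that naively changes one parameter forces compensating changes in the others, and one must verify the orbit still sweeps out the full index set. The oddness of $k,\ell,m$ is essential (it is what makes $2$ invertible mod each of them, so the even-step translations coming from $\Lambda$-conjugation are transitive), mirroring the role of oddness in \Cref{number of conjugacy_classes_G2_in_G2}. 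An alternative, perhaps cleaner, route that I would consider as a fallback: count conjugacy classes via the orbit-counting identity already used in the $\ZZ^3$ and $\pi_1(\mathcal{G}_2)$ cases — express $c_{\mathcal{G}_6,\mathcal{G}_6}(n)$ in terms of $s_{\mathcal{G}_6,\mathcal{G}_6}(n)$ and fixed-point counts under $Ad_x,Ad_y,Ad_z$ — but since the claimed answer is exactly $d_3(n)-3d_3(n/2)+3d_3(n/4)-d_3(n/8)$, which by \Cref{inclusion_exclusion_for_d_3} equals the number of factorizations $n=k\ell m$ into odd factors, the direct bijective argument above is the natural one and I would present it.
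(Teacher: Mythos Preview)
Your approach is essentially the same as the paper's: the triple $(k,\ell,m)$ is a conjugacy invariant, and conjugation by $\Lambda=\langle x^2,y^2,z^2\rangle$ acts transitively on the parameters $(u,v,w)$ because $k,\ell,m$ are odd. The paper's execution is much crisper than what you anticipate, though: it simply records the three formulas $Ad_{x^2}\colon v\mapsto[v-2]_m$, $Ad_{y^2}\colon w\mapsto[w-2]_k$, $Ad_{z^2}\colon u\mapsto[u-2]_\ell$ (each acting on a single coordinate), so the bookkeeping you worry about---conjugation by $x,y,z$ themselves, sign flips, and the re-normalization moves $X_\Delta\mapsto X_\Delta Y_\Delta^{2i}$---never enters; $\Lambda$-conjugation alone already sweeps out the full orbit.
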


\begin{proof}
By \Cref{enumeration_of_G6_in_G6}, a subgroup $\Delta$ of the above
type is defined by the $6$-plet $(k,\ell,m,u,v,w)$. Note that the
conjugation with the elements $x^2$, $y^2$, $z^2$ acts on the above
$6$-plets in the following way:
\begin{align*}
Ad_{x^2}:\, (k,\ell,m,u,v,w)&\mapsto (k,\ell,m,u,[v-2]_{m},w),\\
Ad_{y^2}:\, (k,\ell,m,u,v,w)&\mapsto (k,\ell,m,u,v,[w-2]_{k}),\\
Ad_{z^2}:\, (k,\ell,m,u,v,w)&\mapsto (k,\ell,m,[u-2]_{\ell},v,w).
\end{align*}
Thus each two subgroups with the same triple $(k,\ell,m)$ are
conjugated by a suitable element of $\langle x^2,y^2,z^2\rangle$.
Obviously the conjugation with any element can not change the triple
$(k,\ell,m)$.
\end{proof}

\begin{corollary}
$$
s_{\mathcal{G}_6,\mathcal{G}_6}(n)=n\left(d_3(n)-3d_3\left(\frac{n}{2}\right)+3d_3\left(\frac{n}{4}\right)-d_3\left(\frac{n}{8}\right)\right),
$$
$$
c_{\mathcal{G}_6,\mathcal{G}_6}(n)=d_3(n)-3d_3\left(\frac{n}{2}\right)+3d_3\left(\frac{n}{4}\right)-d_3\left(\frac{n}{8}\right).
$$
\end{corollary}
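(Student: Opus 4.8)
The plan is to read off both formulas directly from \Cref{enumeration_of_G6_in_G6} and \Cref{enumeration_of_G6_in_G6_conjugacy_classes}, combined with the inclusion–exclusion identity of \Cref{inclusion_exclusion_for_d_3}. No new geometric or group-theoretic input is needed; this is pure bookkeeping over the parametrizations already established.

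First I would treat $c_{\mathcal{G}_6,\mathcal{G}_6}(n)$. By \Cref{enumeration_of_G6_in_G6_conjugacy_classes} the conjugacy classes of index-$n$ subgroups isomorphic to $\pi_1(\mathcal{G}_6)$ are in bijection with triples $(k,\ell,m)$ of odd positive integers with $k\ell m=n$, so $c_{\mathcal{G}_6,\mathcal{G}_6}(n)$ is exactly the number of ordered factorizations of $n$ into three odd factors. Since $d_3(n)$ counts all ordered factorizations $abc=n$, the proof of \Cref{inclusion_exclusion_for_d_3} already shows that the alternating sum $d_3(n)-3d_3(n/2)+3d_3(n/4)-d_3(n/8)$ enumerates precisely those factorizations in which all three factors are odd. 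Hence $c_{\mathcal{G}_6,\mathcal{G}_6}(n)=d_3(n)-3d_3(n/2)+3d_3(n/4)-d_3(n/8)$.

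Next I would compute $s_{\mathcal{G}_6,\mathcal{G}_6}(n)$. By \Cref{enumeration_of_G6_in_G6} such subgroups correspond bijectively to $6$-plets $(k,\ell,m,u,v,w)$ with $k,\ell,m$ odd positives, $k\ell m=n$, and $0\le v<m$, $0\le u<\ell$, $0\le w<k$. For each fixed admissible triple $(k,\ell,m)$ the number of admissible $(u,v,w)$ is $m\cdot\ell\cdot k=n$. Summing over all admissible triples gives $s_{\mathcal{G}_6,\mathcal{G}_6}(n)=\sum_{k\ell m=n,\;k,\ell,m\text{ odd}} k\ell m = n\cdot c_{\mathcal{G}_6,\mathcal{G}_6}(n)$, and substituting the formula obtained in the previous step yields the claimed expression $s_{\mathcal{G}_6,\mathcal{G}_6}(n)=n\bigl(d_3(n)-3d_3(n/2)+3d_3(n/4)-d_3(n/8)\bigr)$.

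There is essentially no obstacle: the substantive work lies in \Cref{enumeration_of_G6_in_G6}, \Cref{enumeration_of_G6_in_G6_conjugacy_classes}, and \Cref{inclusion_exclusion_for_d_3}. The only point deserving a sentence of care is observing that the number of tuples $(u,v,w)$ attached to a triple $(k,\ell,m)$ equals exactly $k\ell m=n$, which is what converts the sum $\sum k\ell m$ over odd factorizations into $n$ times the count of such factorizations; after that, both formulas are immediate restatements of \Cref{inclusion_exclusion_for_d_3}.
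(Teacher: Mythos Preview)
Your proposal is correct and follows essentially the same approach as the paper's own proof: use \Cref{enumeration_of_G6_in_G6_conjugacy_classes} together with \Cref{inclusion_exclusion_for_d_3} to obtain the formula for $c_{\mathcal{G}_6,\mathcal{G}_6}(n)$, then observe via \Cref{enumeration_of_G6_in_G6} that each admissible triple $(k,\ell,m)$ carries exactly $k\ell m=n$ choices of $(u,v,w)$, giving $s_{\mathcal{G}_6,\mathcal{G}_6}(n)=n\cdot c_{\mathcal{G}_6,\mathcal{G}_6}(n)$.
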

\begin{proof}
To get the second formula use \Cref{inclusion_exclusion_for_d_3}. To
proceed to the first formula note that for any triple $(k,\ell,m)$
there are $m$ choices of $v$, $k$ choices of $w$ and $\ell$ choices
of $u$. By the second formula there exist
$d_3(n)-3d_3\left(\frac{n}{2}\right)+3d_3\left(\frac{n}{4}\right)-d_3\left(\frac{n}{8}\right)$
triples $(k,\ell,m)$, each corresponds to exactly $n$ different
$6$-plets $(k,\ell,m,u,v,w)$.
\end{proof}


\section{Additional Notes}\label{accessory remarks}
The purpose of this section is to enumerate the normal subgroups
$\Delta$ of index $n$ in $\pi_{1}(\mathcal{G}_{6})$, such that
$\Delta\cong \ZZ^3$. In the notations of \Cref{enumeration_Z^3} the
following holds.
\begin{proposition}\label{propG6-normalZ^3}
The number of normal subgroups of index $n$ in
$\pi_{1}(\mathcal{G}_{6})$, isomorphic to $\ZZ^3$ is given by the
formula
\begin{equation*}\aligned &
|\mathcal{M}_1|= d_3(n/4) + 4 d_3(n/8) + d_3(n/16) + 2 d_3(n/32).
\endaligned\end{equation*}
\end{proposition}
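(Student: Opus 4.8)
The plan is to turn the statement into a count of sign‑symmetric sublattices of $\ZZ^3$. By \Cref{G6_classification} a finite‑index subgroup $\Delta\leqslant\pi_1(\mathcal{G}_6)$ is isomorphic to $\ZZ^3$ exactly when $\phi(\Delta)=1$, i.e.\ $\Delta\leqslant\Lambda$, in which case $[\Lambda:\Delta]=n/4$; and such a $\Delta$ is normal in $\pi_1(\mathcal{G}_6)$ iff it is invariant under conjugation by $x$ and by $y$ (conjugation by $z=(xy)^{-1}$ being then automatic). By \Cref{propG6-1}(iii) these conjugations act on $\Lambda\cong\ZZ^3$, in the basis $x^2,y^2,z^2$, as $\mathrm{diag}(1,-1,-1)$ and $\mathrm{diag}(-1,1,-1)$; since every sublattice is stable under $-\mathrm{Id}$, invariance under the group these two generate is the same as invariance under the full group $W=\{\pm1\}^3$ of coordinate sign changes. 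Calling a $W$‑stable sublattice \emph{symmetric}, we get $|\mathcal{M}_1|=g(n/4)$, where $g(m)$ counts symmetric sublattices of $\ZZ^3$ of index $m$. The function $g$ is multiplicative (a sublattice of coprime composite index splits uniquely as an intersection of its prime‑power parts, and this correspondence is $W$‑equivariant), so it suffices to compute $g(p^k)$. For odd $p$, symmetry together with the identity $v+\mathrm{diag}(1,-1,-1)v=2v_1e_1\in\Delta$ (and its permutations) and the invertibility of $2$ force a symmetric sublattice to be the orthogonal direct sum of its three coordinate‑axis intersections; hence $g(p^k)=d_3(p^k)$.

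The heart of the matter is $g(2^k)$, which I would obtain by stratifying the symmetric sublattices $\Delta$ of $2$‑power index according to the exponents $a_1,a_2,a_3$ defined by $\Delta\cap\ZZ e_i=2^{a_i}\ZZ e_i$. If some $a_i=0$, then $e_i\in\Delta$ and $\Delta=\ZZ e_i\oplus(\Delta\cap e_i^{\perp})$, which reduces that part of the count to the analogous problem in $\ZZ^2$ (resp.\ $\ZZ^1$), handled by the same stratification and elementary. If all $a_i\geqslant1$, the identity $2v_ie_i\in\Delta$ from the previous paragraph forces $\Delta\subseteq 2^{a_1-1}\ZZ\times2^{a_2-1}\ZZ\times2^{a_3-1}\ZZ$, so $\Delta$ is completely determined by the image $\bar\Delta$ of $\Delta$ in $(\ZZ/2)^3\cong(2^{a_1-1}\ZZ/2^{a_1}\ZZ)\times(2^{a_2-1}\ZZ/2^{a_2}\ZZ)\times(2^{a_3-1}\ZZ/2^{a_3}\ZZ)$; moreover, because the exponents are exactly $a_i$, the subgroup $\bar\Delta$ meets every coordinate line of $(\ZZ/2)^3$ trivially. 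There are exactly six such subgroups --- the trivial one, the three lines spanned by $e_i+e_j$ with $i<j$, the line spanned by $e_1+e_2+e_3$, and the plane $\{v:v_1+v_2+v_3=0\}$, of orders $1,2,2,2,2,4$ --- and if $\bar\Delta$ has order $2^j$ then $[\ZZ^3:\Delta]=2^{a_1+a_2+a_3-j}$. Summing over $j\in\{0,1,2\}$ and over the positive triples $(a_1,a_2,a_3)$ with $a_1+a_2+a_3=k+j$, and adding the contributions of the strata with some $a_i=0$, gives $g(2^k)$ in closed form.

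Finally, packaging the local counts as a Dirichlet series: the odd‑prime factors assemble to $\prod_{p\ \mathrm{odd}}(1-p^{-s})^{-3}$, while the $2$‑factor is $\sum_k g(2^k)2^{-ks}$, a polynomial in $2^{-s}$ divided by $(1-2^{-s})^3$; their product is thus a polynomial in $2^{-s}$ times $\zeta(s)^3=\sum_m d_3(m)m^{-s}$, and comparing coefficients expresses $g$, hence $|\mathcal{M}_1|=g(n/4)$, as the stated short combination of shifted copies of $d_3$. The step I expect to be delicate is the $2$‑adic stratification: justifying the drop to lower dimension when some $a_i=0$ without double counting, and verifying that when all $a_i\geqslant1$ no symmetric sublattice escapes $2^{a_i-1}\ZZ$ in any coordinate, so that the reduction to the six transversal subgroups of $(\ZZ/2)^3$ is genuinely exhaustive.
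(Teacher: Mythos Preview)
Your strategy is sound and genuinely different from the paper's. The paper works directly with Hermite normal form: it writes each $\Delta\le\Lambda$ of index $m=n/4$ as the column span of $\bigl(\begin{smallmatrix} a & d & f \\ 0 & b & e \\ 0 & 0 & c \end{smallmatrix}\bigr)$, asserts that normality is equivalent to the two conditions $(2d,0,0)\in\langle(a,0,0)\rangle$ and $(2f,2e,0)\in\langle(a,0,0),(d,b,0)\rangle$, solves these to get eight admissible matrix shapes, and adds up their counts. Your route---multiplicativity, the observation that at odd primes a $W$-invariant sublattice splits coordinatewise, and at $p=2$ the reduction to transversal subgroups of $(\ZZ/2)^3$---is more structural and explains \emph{why} the Dirichlet series is a polynomial in $2^{-s}$ times $\zeta(s)^3$. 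Your stratification and your list of six transversal subgroups of $(\ZZ/2)^3$ are both correct; the double-counting worry you raise is harmless once you index strata by the set $\{i:a_i=0\}$ and insist the complementary coordinates have $a_i\ge1$.

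There is, however, a more serious issue: if you push your computation through, you will obtain
\[
|\mathcal{M}_1|\;=\;d_3(n/4)+4\,d_3(n/8)+d_3(n/16),
\]
\emph{without} the term $2\,d_3(n/32)$. This is because the proposition as stated is in error. The paper's two conditions are necessary but not sufficient for normality: they encode invariance of the second column under all sign changes and invariance of the third column under $\mathrm{diag}(-1,-1,1)$, but they omit invariance of the third column under $\mathrm{diag}(1,-1,-1)$, which forces the further congruence $a\mid 2f$. That extra condition kills the paper's last two matrix types (those with $f=a/4$ and $f=3a/4$), and with them the $2\,d_3(n/32)$ term. Concretely, for $m=8$ take $(a,b,c,d,e,f)=(4,2,1,2,1,1)$, so $\Delta=\langle(4,0,0),(2,2,0),(1,1,1)\rangle$; the paper's two conditions hold, yet $(1,1,1)\in\Delta$ while $\mathrm{diag}(1,-1,-1)(1,1,1)=(1,-1,-1)\notin\Delta$ (since $(2,0,0)\notin\Delta$). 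Your method gives $g(8)=37$; the stated formula gives $39$. So your plan is correct, and the discrepancy it will produce reflects an error in the paper's proposition rather than in your argument.
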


\begin{proof}
As it was shown above, a subgroup $\Delta$ of described type is a
subgroup of index $\frac{n}{4}$ in $\Lambda$, so we use \Cref{number
of sublattices}. The matrix $\bigl(\begin{smallmatrix}a & d & f \\
0 & b & e \\ 0 & 0 & c
\end{smallmatrix}\bigr)$ determines a
normal subgroup if and only if $(2d,0,0)\in \langle(a,0,0) \rangle$
and $(2f,2e,0)\in \langle(a,0,0),(d,b,0) \rangle$. Thus we have to
find the number of integer matrixes among the following eight: $\bigl(\begin{smallmatrix}  a & 0 & 0 \\
0 & b & 0 \\ 0 & 0 & c\end{smallmatrix}\bigr)$, $\bigl(\begin{smallmatrix} a & 0 & a/2 \\
0 & b & 0 \\ 0 & 0 & c \end{smallmatrix}\bigr)$, $\bigl(\begin{smallmatrix} a & a/2 & 0 \\
0 & b & 0 \\ 0 & 0 & c \end{smallmatrix}\bigr)$, $\bigl(\begin{smallmatrix} a & a/2 & a/2 \\
0 & b & 0 \\ 0 & 0 & c \end{smallmatrix}\bigr)$, $\bigl(\begin{smallmatrix} a & 0 & 0 \\
0 & b & b/2 \\ 0 & 0 & c \end{smallmatrix}\bigr)$, $\bigl(\begin{smallmatrix} a & 0 & a/2 \\
0 & b & b/2 \\ 0 & 0 & c \end{smallmatrix}\bigr)$, $\bigl(\begin{smallmatrix} a & a/2 & a/4 \\
0 & b & b/2 \\ 0 & 0 & c \end{smallmatrix}\bigr)$,
$\bigl(\begin{smallmatrix} a & a/2 & 3a/4 \\
0 & b & b/2 \\ 0 & 0 & c \end{smallmatrix}\bigr)$. The first matrix
is always integer, that is appears $d_3(\frac{n}{4})$ times, once in
each factorization of the type $abc=\frac{n}{4}$. The next three
matrices are integer if $a$ is even, that is they appear in
$d_3\left(\frac{n}{8}\right)$ factorizations $2abc=\frac{n}{4}$.
Analogously the fifth matrix is integer if $b$ is even, so this
matrix is counted $d_3\left(\frac{n}{8}\right)$ times. The sixth
matrix is integer if $a$ and $b$ are both even, it is counted
$d_3\left(\frac{n}{16}\right)$ times. The seventh and eighth
matrices are integer if $4\mid a$ and $b$ is even, they are counted
$d_3\left(\frac{n}{32}\right)$ times. So
$$
|\mathcal{M}_1|= d_3(n/4) + 4 d_3(n/8) + d_3(n/16) + 2 d_3(n/32).
$$\end{proof}

{\bf Remark.} The respective Dirichlet generating function is
$2^{-2s}\big(1+4\cdot2^{-s}+2^{-2s}+2\cdot2^{-3s}\big)\zeta^3(s)$,
see Appendix for details.

\begin{proposition}\label{propG6-normalG_2}
The number of normal subgroups $\Delta$ of index $n$ in
$\pi_{1}(\mathcal{G}_{6})$ isomorphic to $\pi_{1}(\mathcal{G}_{2})$
equals to $3$ in case $n$ is of the form $4m+2$; $6$ in case $n$ is
of the form $8m+4$; $0$ in all other cases.
\end{proposition}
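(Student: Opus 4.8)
The plan is to reduce the problem to counting normal subgroups contained in one fixed index-two subgroup, and then read off the answer from the parametrization of $\mathcal{G}_2$-subgroups set up in \Cref{sectG2_in_G6}. By \Cref{G6_classification} a finite-index subgroup $\Delta\cong\pi_{1}(\mathcal{G}_{2})$ satisfies $|\phi(\Delta)|=2$, hence lies in exactly one of $\Gamma_x,\Gamma_y,\Gamma_z$. A cyclic permutation of $x,y,z$ induces an automorphism of $\pi_{1}(\mathcal{G}_{6})$ (the presentation (\ref{all_relations}) being symmetric) which cyclically permutes $\Gamma_x,\Gamma_y,\Gamma_z$ and sends normal subgroups to normal subgroups of the same index and isomorphism type. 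Hence the number we want equals $3N_x$, where $N_x$ is the number of subgroups $\Delta\leqslant\Gamma=\Gamma_x$ of index $\frac n2$ in $\Gamma$, isomorphic to $\pi_{1}(\mathcal{G}_{2})$, that are normal in $\pi_{1}(\mathcal{G}_{6})$.

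Next I would translate normality into conditions on the invariants of \Cref{number of G2_in_G2}. Write such a $\Delta$ via a triple $(k,H,h)$, where $k$ is an odd divisor of $\frac n2$, $H\leqslant\langle y^2,z^2\rangle\cong\ZZ^2$ has index $\frac{n}{2k}$, and $h\in\ZZ^2/H$; take $Z_\Delta=x^{k}y^{2s}z^{2t}$ with $(s,t)$ representing $h$, and identify $y^{2p}z^{2q}$ with $(p,q)$. Since $xyz=1$, the group $\pi_{1}(\mathcal{G}_{6})$ is generated by $x$ and $y$, so $\Delta$ is normal iff $\Delta^{x}=\Delta$ and $\Delta^{y}=\Delta$. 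Using the multiplication rules of \Cref{propG6-1} one checks that $\Delta^{x}$ has invariant triple $(k,H,-h)$ (since $Ad_x$ negates both $y^2$ and $z^2$ and preserves the exponent at $x$), while $\Delta^{y}$ has invariant triple $\bigl(k,\ell(H),\overline{(s-1,-t-1)}\bigr)$ with $\ell(p,q)=(p,-q)$ — the last computation being exactly the one carried out in the proof of \Cref{invariant_partial_G2_classes}. Therefore $\Delta$ is normal precisely when (A) $2h=0$ in $\ZZ^2/H$, (B) $\ell(H)=H$, and (C) $(1,2t+1)\in H$.

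Finally I would run the case analysis on $H$ forced by (B). By \Cref{number of mirror-preserved_in_Z^2}, either $H=\langle(a,0),(0,b)\rangle$ with $ab=\frac{n}{2k}$ (first type), or $H=\langle(a,0),(a/2,b)\rangle$ with $a$ even and $ab=\frac{n}{2k}$ (second type). In the first type, (C) forces $a=1$ and $b$ odd, while (A) forces $b\mid t$; together they force $b=1$, so $H=\ZZ^2$ and $k=\frac n2$, which occurs exactly when $\frac n2$ is odd, and then the trivial quotient $\ZZ^2/H$ leaves one coset, giving one subgroup. In the second type, (C) forces $a=2$ and $b$ odd, while (A) again forces $b\mid t$; together $b=1$, so $H=\langle(2,0),(1,1)\rangle$ has index $2$ and $k=\frac n4$, which occurs exactly when $\frac n4$ is odd, and then both cosets of $\ZZ^2/H$ satisfy (A) and (C), giving two subgroups. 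Consequently $N_x=1$ when $n\equiv 2\bmod 4$, $N_x=2$ when $n\equiv 4\bmod 8$, and $N_x=0$ otherwise (that is, when $n$ is odd or $8\mid n$); multiplying by $3$ gives $3$, $6$ and $0$ respectively.

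The step I expect to cost the most effort is the middle one: computing exactly how $Ad_x$ and $Ad_y$ act on the triple $(k,H,h)$, checking in particular that both automorphisms preserve $k$ and the index $\frac n2$, and getting the coset shift $h\mapsto\overline{(s-1,-t-1)}$ under $Ad_y$ right (including renormalizing $Z_{\Delta^y}$ so that its exponent at $x$ is positive and odd). Once that is in place the case analysis is routine, the only delicate point being that (A) together with (C) is incompatible unless $b=1$ in both types, which is what collapses the admissible lattices $H$ to $\ZZ^2$ and $\langle(2,0),(1,1)\rangle$.
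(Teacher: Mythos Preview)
Your argument is correct and follows essentially the same route as the paper: reduce by symmetry to $\Gamma_x$, parametrize $\Delta$ by the triple $(k,H,h)$ of \Cref{number of G2_in_G2}, and determine which triples are fixed under conjugation. The only tactical difference is that the paper checks invariance under all of $\Gamma_x$ at once (via \Cref{number of conjugacy_classes_G2_in_G2}) together with conjugation by $y$, obtaining directly the $h$-free condition $(2,0),(0,2),(1,1)\in H$, which immediately pins $H$ down to index $1$ or $2$; you instead check the two generators $x,y$ and obtain the $h$-dependent conditions (A), (C), which then require the short extra step of combining $b\mid 2t$ and $b\mid 2t+1$ to force $b=1$. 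Both routes arrive at the same two lattices $H=\ZZ^2$ and $H=\langle(2,0),(1,1)\rangle$ and the same count.
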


\begin{proof}
In this proof we follow notations and overall ideas of
\Cref{sectG2_in_G6} (see first two paragraphs). So it is sufficient
to enumerate the subgroups $\Delta$ in $\Gamma_x$ of the type
considered in \Cref{propG6-normalG_2}. \Cref{number of G2_in_G2}
claims that a subgroup $\Delta$ of the above type is uniquely
defined by a triple $(k,H,h)$, while \Cref{number of
conjugacy_classes_G2_in_G2} describes the transformations of such
triple under conjugation of the group $\Delta$ with an element $g
\in \Gamma_x$. Similarly, the proof of
\Cref{invariant_partial_G2_classes} describes the transformation of
triple $(k,H,h)$ induced by the conjugation of the group $\Delta$ by
an element $y$. Since $\pi_{1}(\mathcal{G}_{6})=\Gamma_x \cup y
\Gamma_x$, each conjugation can be achieved as a composition of
described above.

Summarizing, we get that the invariant $h$ is preserved by any
conjugation if and only if \linebreak$(2,0), (0,2), (1,1) \in H$.
This holds for just two subgroups $H$ having index $1$ and $2$ in
$\langle y^2, z^2\rangle \cong \ZZ^2$ respectively. Both subgroups
are normal in $\mathcal{G}_{6}$, thus in both cases $H$ is
automatically preserved by any conjugation. So, case
$k=\frac{n}{2}$, where $k$ is odd, provides one subgroup of
$\Gamma_x$ which is normal in $\pi_{1}(\mathcal{G}_{6})$. Case
$k=\frac{n}{4}$, where $k$ is odd, provides two subgroups of
$\Gamma_x$ which are normal in $\pi_{1}(\mathcal{G}_{6})$. No other
values of $k$ provides normal subgroups.
\end{proof}

\begin{proposition}
Any normal subgroup $\Delta \unlhd  \pi_{1}(\mathcal{G}_{6})$
isomorphic to  $\pi_{1}(\mathcal{G}_{6})$ coincide with whole
$\pi_{1}(\mathcal{G}_{6})$.
\end{proposition}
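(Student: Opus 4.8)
The plan is to read the result directly off \Cref{enumeration_of_G6_in_G6} and \Cref{enumeration_of_G6_in_G6_conjugacy_classes}, which between them describe both the subgroups of $\pi_{1}(\mathcal{G}_{6})$ isomorphic to $\pi_{1}(\mathcal{G}_{6})$ and their conjugacy classes. Let $\Delta \unlhd \pi_{1}(\mathcal{G}_{6})$ be normal with $\Delta \cong \pi_{1}(\mathcal{G}_{6})$, and set $n = |\pi_{1}(\mathcal{G}_{6}):\Delta|$. By \Cref{G6_classification} we have $|\phi(\Delta)| = 4$, so by \Cref{enumeration_of_G6_in_G6} the subgroup $\Delta$ is encoded by a $6$-plet $(k,\ell,m,u,v,w)$ with $k,\ell,m$ odd positive, $k\ell m = n$, $0\le v<m$, $0\le u<\ell$, $0\le w<k$.

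The heart of the argument is a size-of-orbit count. For the fixed triple $(k,\ell,m)$ there are exactly $m\cdot\ell\cdot k = n$ admissible $6$-plets $(k,\ell,m,u,v,w)$, hence, by the bijection of \Cref{enumeration_of_G6_in_G6}, exactly $n$ distinct subgroups of index $n$ isomorphic to $\pi_{1}(\mathcal{G}_{6})$ carrying this triple. \Cref{enumeration_of_G6_in_G6_conjugacy_classes} says precisely that these $n$ subgroups constitute a single conjugacy class; concretely, the maps $Ad_{x^2}, Ad_{y^2}, Ad_{z^2}$ recorded in its proof shift $v,w,u$ by $2$ modulo $m,k,\ell$ respectively, and since $m,k,\ell$ are odd the element $2$ generates each of $\ZZ_m,\ZZ_k,\ZZ_\ell$, so conjugation already realizes every completion of the triple. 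Therefore the conjugacy class of $\Delta$ has exactly $n$ members. As $\Delta$ is normal this class is a singleton, forcing $n = 1$, i.e. $\Delta = \pi_{1}(\mathcal{G}_{6})$.

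I do not expect a serious obstacle: the substantive content already lives in \Cref{enumeration_of_G6_in_G6} and \Cref{enumeration_of_G6_in_G6_conjugacy_classes}, and the only point requiring a word of care is that the conjugation orbit on subgroups with a fixed triple has full size $n$ rather than a proper divisor of it, which is transparent from the explicit shift formulas above. If one wished to avoid the exact count, an alternative is to pass to the finite quotient $\pi_{1}(\mathcal{G}_{6})/(\Delta\cap\Lambda)$, in which $\Delta/(\Delta\cap\Lambda)\cong\ZZ_2^2$ is a complement to the normal subgroup $\Lambda/(\Delta\cap\Lambda)\cong\ZZ_m\times\ZZ_k\times\ZZ_\ell$; normality of $\Delta$ would make this quotient a direct product, contradicting the nontrivial sign action of the holonomy on $\Lambda$ unless $m=k=\ell=1$. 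The conjugacy-count route is shorter and self-contained given the preceding propositions, so that is the one I would write up.
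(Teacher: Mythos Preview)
Your proposal is correct and is exactly the argument the paper has in mind: the paper's ``proof'' is simply the one-line remark that this is already contained in the proof of \Cref{enumeration_of_G6_in_G6_conjugacy_classes}, and you have unpacked that reference explicitly, namely that the $n$ subgroups sharing the triple $(k,\ell,m)$ form a single conjugacy class (via the shift action of $Ad_{x^2},Ad_{y^2},Ad_{z^2}$ on $v,w,u$ modulo the odd numbers $m,k,\ell$), so a normal subgroup forces $n=k\ell m=1$.
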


Actually this is shown in the proof of
\Cref{enumeration_of_G6_in_G6_conjugacy_classes}.


\section*{Appendix\label{Appendix}}
Given a sequence $\{f(n)\}_{n=1}^\infty$, the formal power series
$$
\widehat{f}(s)=\sum_{n=1}^\infty\frac{f(n)}{n^s}
$$
is called a Dirichlet generating function for
$\{f(n)\}_{n=1}^\infty$. To reconstruct the sequence $f(n)$ from
$\widehat{f}(s)$ one can use Perron's formula (\cite{Apostol}, Th.
11.17). Given sequences $f(n)$ and $g(n)$ we call their {\em
convolution} $(f\ast g)(n) = \sum_{k \mid n}f(k)g(\frac{n}{k})$. In
terms of Dirichlet generating series the convolution of sequences
corresponds to the multiplication of generating series
$\widehat{f\ast g}(s)=\widehat{f}(s)\widehat{g}(s)$. For the above
facts see, for example, (\cite{Apostol}, Ch. 11--12).

Here we present the Dirichlet generating functions for the sequences
$s_{H,G}(n)$ and $c_{H,G}(n)$. Since Theorems 1 and 2 provide the
explicit formulas, the remainder is done by direct calculations.

Consider the Riemann zeta function
$\displaystyle\zeta(s)=\sum_{n=1}^{\infty}\frac{1}{n^s}$. Following
\cite{Apostol} note that
\begin{align*}
\widehat{\sigma}_0(s)&=\zeta^2(s), &
\widehat{\sigma}_2(s)&=\zeta^2(s)\zeta(s-1), &
\widehat{d}_3(s)&=\zeta^3(s), & \widehat{\omega}(s)&=
\zeta(s)\zeta(s-1)\zeta(s-2).
\end{align*}

\begin{center}
\small{Table~2. Dirichlet generating functions for the sequences
$s_{H,\mathcal{G}_6}(n)$ and $c_{H,\mathcal{G}_6}(n)$.}
\end{center}
\begin{tabular}{|c|c|p{8.5cm}|}\hline
$H$&$s_{H,\mathcal{G}_6}$&$c_{H,\mathcal{G}_6}$\\\hline
$\pi_1(\mathcal{G}_1)$&$4^{-s}\zeta(s)\zeta(s-1)\zeta(s-2)$&
$4^{-s-1}\zeta(s)\zeta(s-1)\big(\zeta(s-2)+3(1+3\cdot2^{-s})\zeta(s)\big)$
\\\hline $\pi_1(\mathcal{G}_2)$&$2^{-s}\big(1-2^{-s}\big)\zeta(s)\zeta(s-1)\zeta(s-2)$&$3\cdot2^{-s-1}(1-2^{-s})\zeta^2(s)\big((1+3\cdot2^{-s})\zeta(s-1)+{(1-2^{-s})^2(1+2^{-s+1})\zeta(s)}\big) $\\\hline
$\pi_1(\mathcal{G}_1)$&$\big(1-2^{-s+1}\big)^3\zeta^3(s-1)$&$\big(1-2^{-s}\big)^3\zeta^3(s)
$
\\\hline
\end{tabular}\medskip

\section*{Acknowledgements}
The authors are grateful to an anonymous referee for pointing out a
deeper background to the subject of our research.


\bigskip

\address{G.~Chelnokov \\
 National Research University Higher School of Economics, Moscow, Russia
 \\}
 {\small\tt grishabenruven@yandex.ru }


\address{A.~Mednykh \\
 Sobolev Institute of Mathematics, Novosibirsk, Russia \\
 Novosibirsk State University, Novosibirsk, Russia\\}
{ mednykh@math.nsc.ru }


\begin{thebibliography}{6}

\bibitem{Apostol} T. Apostol,  {Introduction to Analytic Number
Theory},  { Springer Science+Business Media}, New York, 1976.

\bibitem{babaika}
M. I. Aroyo, A. Kirov, C. Capillas,  J.M. Perez-Mato and H.
Wondratschek,  Bilbao Crystallographic Server II: Representations of
crystallographic point groups and space groups, {Acta Crystallogr.}
Sect. A62 (2006), 115--128, doi: 10.1107/S0108767305040286.


\bibitem{We1} G. Chelnokov, M. Deryagina and A. Mednykh,
On the coverings of Euclidean manifolds $\mathbb{B}_1$ and
$\mathbb{B}_2$, {\it Comm. Algebra}  45 (2017), no. 4, 1558--1576.

\bibitem{We2}  G. Chelnokov and  A. Mednykh,  On the coverings of Euclidean manifolds $\mathbb{G}_2$ and
$\mathbb{G}_4$, {Comm. Algebra} 48 (2020), no. 7, 2725--2739,
doi:10.1080/00927872.2019.1705468.

\bibitem{We3}  G. Chelnokov and A. Mednykh,  On the coverings of Euclidean manifolds $\mathbb{G}_3$ and
$\mathbb{G}_5$, {\it J. Algebra}  560 (2020), 48--66,
https://doi.org/10.1016/j.jalgebra.2020.05.010.

\bibitem{We4} G. Chelnokov and A. Mednykh, On the coverings of Euclidean manifolds $\mathbb{B}_3$ and
$\mathbb{B}_4$, preprint https://arxiv.org/abs/2007.11367

\bibitem{Eve} B. Everitt, 3-manifolds from Platonic solids, Top. Appl. 138 (2004),
253-263.

\bibitem{H-W} W. Hantzsche and H. Wendt, Dreidimensionale euklidische Raumformen,
Mathematische Annalen  110 (1935), 593--611.

\bibitem{Hatch}  A. Hatcher,  {Algebraic Topology},  {Cambridge
University Press}, Cambridge, 2001.

\bibitem{HKM} H. Helling, A.C. Kim and J.L. Mennicke, A geometric study of Fibonacci groups, J. Lie Theory 8 (1998), no. 1, 1--23. 

\bibitem{a3} H.M. Hilden, M.T. Lozano and J.M. Montesinos,  The arithmeticity of the figure-eight knot
orbifolds, B. Apanasov (ed.) W. Neumann (ed.) A. Reid (ed.) L.
Siebenmann (ed.), Topology'90 , de Gruyter (1992), 169--183.

\bibitem{LisMed00}  V. Liskovets and  A. Mednykh,  Enumeration of subgroups in the fundamental groups of
orientable circle bundles over surfaces, {\it Comm. Algebra}  28
(2000), no. 4, 1717--1738.

\bibitem{LisMed12}  V. Liskovets and  A. Mednykh,   The number of subgroups in the fundamental groups
of some non-oriented 3-manifolds, in: Formal Power Series and
Algebraic Combinatorics, Proceedings of the 12th International
Conference, FPSAC'00, Moscow, Russia, June 26--30, 2000, Springer,
Berlin, 2000, pp. 276--287.

\bibitem{Lub} 
%
A. L{\"u}botzky and D. Segal, {Subgroup growth}, {Progress in
Mathematics}, 212, Birkh{\"a}user Verlag, Basel, 2003.


\bibitem{Med78} A. Mednykh, Hurwitz problem on the number of nonequivalent coverings of a
compact Riemann surface. {Sib. Math. J.}  23 (1982), no. 3,
415--420.


\bibitem{Med79} A. Mednykh, On unramified coverings of
compact Riemann surfaces. { Soviet Math. Dokl.}  20  (1979), no. 1,
85--88.




\bibitem{MP86} A. Mednykh and G. Pozdnyakova,  Number
of nonequivalent coverings over a nonorientable compact surface.
{Sib. Math. J.} 27  (1986), no. 1, {99--106}.


\bibitem{Medn} A. Mednykh, Counting conjugacy classes of subgroups in a
finitely generated group, {J. Algebra}   320  (2008), no. 6,
2209--2217.

\bibitem{Now} W. Nowacki, Die euklidischen, dreidimensionalen, geschlossenen und
offenen Raumformen,  Comment. Math. Helvetici 7 (1934/35), 81--93.



\bibitem{Ratc}
 J. Ratcliffe and  S. Tschantz,  Abelianization of space groups,
{Acta  Crystallogr.},   Sect.  A65 (2009) 18--27.

\bibitem{Conway}
J. Rossetti and J. Conway, Hearing the platycosms. {Math. Res.
Lett.}
 13  (2006), no. 3, 475–494, DOI: 10.4310/ MRL.2006.v13.n3.a12.

\bibitem{Ruth} J. Rutherford and Sublattice enumeration. IV. Equivalence classes of
plane sublattices by parent Patterson symmetry and colour lattice
group type, {Acta  Crystallogr.}   Sect. A65 (2009), 156--163.

\bibitem{Szcz} A. Szczepa{\'n}ski, The Euclidean Represemtations of
the Fibonacci Groups, Quart. J. Math. 52~(2001), 385--389.

\bibitem{a9} A.Yu. Vesnin and A.D. Mednykh,  Fibonacci manifolds as two-fold coverings over the
three-dimensional sphere and the Meyerhoff-Neumann conjecture, Sib.
Math. J.  37  (1996), no. 3, 461--467.

\bibitem{Wolf}
J. Wolf,   Spaces of Constant Curvature, Mc Graw-Hill Book Company,
New York, 1972.

\bibitem{Zim} B. Zimmermann, On the Hantzsche-Wendt manifold,  
Mon Hefte Math 110 (1990), 321–327,
https://doi.org/10.1007/BF01301685.

\end{thebibliography}
\end{document}